\title{Symmetric Interior Penalty Discontinuous Galerkin Discretisations
and Block Preconditioning for Heterogeneous Stokes Flow\thanks{Author
S.~M.~Schnepp acknowledges financial support from the Swiss University Conference and the Swiss Council of Federal Institutes of Technology through the 
Platform for Advanced Scientific Computing (PASC) program.
Author D.~E. Charrier acknowledges financial support from the
European Union’s Horizon 2020 research and innovation programme under grant
agreement No 671698.}} 
\author{D.E.~Charrier\footnotemark[2]\ \footnotemark[4]\ \footnotemark[3]
\and D.A.~May\footnotemark[4]\ \footnotemark[5]
\and S.M.~Schnepp\footnotemark[4]}
\begin{document}
\maketitle

\renewcommand{\thefootnote}{\fnsymbol{footnote}}

\footnotetext[2]{Fachbereich Mathematik, Technische Universit{\"a}t Darmstadt, AG Numerik und Wissenschaftliches Rechnen, Dolivostra{\ss}e 15, 64293 Darmstadt, Germany}
\footnotetext[3]{School of Engineering and Computing Sciences, Durham University, Lower Mountjoy South Road, Durham DH1 3LE, United Kingdom (\email{dominic.e.charrier@durham.ac.uk})}
\footnotetext[4]{Institute of Geophysics, ETH Z{\"u}rich, Sonneggstrasse 5, 8092
Z{\"u}rich, Switzerland (\email{mail@saschaschnepp.net})}
\footnotetext[5]{Department of Earth Sciences, University of Oxford, South Parks Road, OX1 3AN
Oxford, United Kingdom (\email{david.may@earth.ox.ac.uk})}

\renewcommand{\thefootnote}{\arabic{footnote}}

\slugger{sisc}{201x}{xx}{x}{x--x}

\definecolor{cdmay}{rgb}{.6,.1,.1}
\definecolor{cdec}{rgb}{.1,.4,.1}
\definecolor{csms}{rgb}{.1,.1,.4}
\newcommand{\act}[1]{\textcolor{cdmay}{#1}}
\newcommand{\actdec}[1]{\textcolor{cdec}{#1}}
\newcommand{\actsms}[1]{\textcolor{csms}{#1}}

\begin{abstract}
Provable stable arbitrary order symmetric interior
penalty discontinuous Galerkin (SIP) discretisations of heterogeneous, 
incompressible Stokes flow utilising $Q^2_k$--$Q_{k-1}$ elements
and hierarchical Legendre basis polynomials are developed and investigated.
For solving the resulting linear system, a block preconditioned iterative
method is proposed. 
The nested viscous problem is solved by a
$hp$-multilevel preconditioned Krylov subspace method.
For the $p$-coarsening, a twolevel method
utilising element-block Jacobi preconditioned iterations as a smoother
is employed. Piecewise bilinear ($Q^2_1$) and piecewise constant
($Q^2_0$) $p$-coarse spaces are considered.
Finally, Galerkin $h$-coarsening is proposed and investigated for
the two $p$-coarse spaces considered.
Through a number of numerical experiments, we demonstrate that utilising the
$Q^2_1$ coarse space results in the most robust $hp$-multigrid method for
heterogeneous Stokes flow.
Using this $Q^2_1$ coarse space we observe that the convergence of the overall
Stokes solver appears to be robust with respect to the jump in the viscosity and only mildly depending on the
polynomial order $k$.
It is demonstrated and supported by theoretical results that the convergence of
the SIP discretisations and the iterative methods
rely on a sharp choice of the penalty parameter based on local values
of the viscosity.
\end{abstract} 

\begin{keywords}
heterogeneous Stokes flow,
variable viscosity,
incompressible flow,
block preconditioners,
DG,
SIP,
Galerkin multigrid,
geodynamics,
\end{keywords}

\begin{AMS}
76D07,65M55,65N30,65N12
\end{AMS}

\pagestyle{myheadings}
\thispagestyle{plain}
\markboth{D.E. Charrier et al.}{SIP AND PRECONDITIONING FOR HETEROGENEOUS STOKES}

\section{Introduction}
\subsection{Background and Motivations}
Earth exhibits a diverse range of unique geological processes:
mountain building, subduction and continental rifting, earthquakes and volcanism. These
phenomena are the result of multi-phase, history-dependent, large-deformation processes
spanning million year time scales.

Computational models provide a viable technique to study the evolution in both space and time of geological
processes. 
A prototypical continuum description of the behaviour of rocks is stationary,
incompressible Stokes flow with Boussinesq approximation
\cite{turc.torr.431.1973, schubert_mantle_2001}:
\begin{equation}
\begin{split}
- \textbf{div} \big[ \bar{\eta}(\uvec,p,T,\theta(x)) \, \strain{\uvec}
\big] + \nabla p &= \rho_0(\theta) \big( 1 - \alpha(\theta) ( T - T_0) \big) \hat{\boldsymbol g} \\
- \textup{div}(\uvec) &= 0,
\end{split}
\end{equation}
where $\uvec$, $p$, $\strain{\cdot}$ is the velocity, pressure and strain rate,
respectively,
$T$ is the temperature,
$\theta$ is the material composition,
$\bar{\eta}$ the effective viscosity,
$\rho_0$ is the reference density at reference temperature $T_0$, and
$\hat{\boldsymbol g}$ is the gravity vector.
The conservation of momentum and mass for the creeping fluid is coupled with the
conservation of energy equation:
\begin{equation}
\rho_0(\theta) C_p \frac{DT}{Dt} = \textup{div} \big( k(\theta) \nabla T \big) +
Q(\theta),
\end{equation}
where $C_p$ is the specific heat at constant pressure, $k$ the conductivity, and
$Q$ the external heat source; and the evolution of the composition:
\begin{equation}
\frac{\partial \theta}{\partial t} + \uvec \cdot \nabla \theta = 0.
\end{equation}
From high-pressure and temperature laboratory experiments of minerals, it is known that rocks exhibit
thermally activated creep and follow an Arrhenius type law
\cite{ranalli1995rheology, karato1997phase}:
\begin{equation}
\bar{\eta}(u,p,T,\theta) = A(\theta) \varepsilon_{II}^{n(\theta)} \, \exp\left[ \frac{E(\theta) + pV(\theta)}{n(\theta)RT} \right],
\end{equation}
where $A$ is a compositional dependent experimentally determined constant, $\varepsilon_{II}$ is
the second invariant of the strain rate tensor, $E, V$ are the activation energy and activation volume,
$R$ is the universal gas constant and $n$
is the power-law exponent.
To facilitate brittle behaviour at low temperature, the ductile creep laws are
augmented with a plasticity model (e.g. Drucker-Prager \cite{ranalli1995rheology}).

When such a composite flow law is applied to geodynamics scenarios,
the effective viscosity ($\bar{\eta}$) is highly heterogeneous.
At depths $> 200$ km, ductile behaviour dominates and the viscosity profile can to first order be characterised by a smooth,
exponential function. Above, due to material failure or compositional variations
associated with crustal layers, the viscosity profile will be discontinuous and
can possess jumps on the order of $10^4$--$10^8$ Pa s.
Realistic forward models of both mantle- and crust-scale simulations are adversely affected
by the degree of heterogeneity within the viscosity due to both accuracy concerns associated with the
particular spatial discretisation used, and a lack of solver (linear and nonlinear) robustness.
\subsection{Related work}
Geodynamics forward models of incompressible Stokes which permit highly heterogeneous viscosity structures
have traditionally utilised finite difference (FD) (e.g. \cite{wein.schm.425.1992, Zaleski199255}),
finite volume (e.g. \cite{Gerya03, stem.etal.223.2006,tack.7.2008}),
or finite element (FE) (e.g. \cite{poli.podl:gji.553.1992, full:gji.1.1995, more.dufo.ea:jcp.476.2003, Popov200855, braun2008douar, GJI:GJI5164, may2015scalable})
spatial discretisations.
The relative merits of FD and FE methods for geodynamics applications can be broadly summarised as follows:

Staggered grid FD methods are ``cheap'' (few non-zeros in the stencil),
the general implementation is rather straight-forward.
However, geometric flexibility is limited, and boundary condition imposition
is non-trivial. Introducing new physics may further require the development of a
modified stencil;
see e.g. \cite{gerya2013adaptive, Duretz01032016}. 
In the context of nonlinear problems, Newton linearisation causes stencil
growth and thus increases the overall cost of the discretisation.

Whilst being more expensive than FD methods (on the same grid),
inf-sup stable FE methods permit
geometric versatility and natural boundary conditions are trivial to impose.
Spatial adaptivity ($h$) can be readily introduced without requiring redeveloping the 
underlying numerical method (e.g. \cite{leng_zhon_Q04006_2011,
burs.etal.1691.2009, kronbichler_high_2012}). Newton linearisation does not
cause the equivalent of stencil growth.
For both FD and FE discretisations, robust multi-level preconditioners suitable
for highly heterogeneous viscosity structures exist \cite{tack.7.2008, burs.etal.1691.2009, kronbichler_high_2012, may2015scalable}.

Inf-sup stable discontinuous Galerkin (DG) methods of the interior penalty type
for the Stokes equations can be constructed by using the tensor product element pairs
$Q^2_k$--$Q_{k-1}$, and $Q^2_k$--$Q_{k-2}$ \cite{toselli_hp_2002,schotzau_mixed_2002}, as well as the $H(\text{div};\Omega)$-conforming Raviart-Thomas,
Brezzi-Douglas-Marini, and Brezzi-Douglas-Fortin-Marini kind
element pairs; see
\cite{wang_new_2007,cockburn_note_2007,ayuso_de_dios_simple_2014,kanschat_multigrid_2015}.

The fact that adaptivity in space and approximation order ($k$ or ``$p$'' in the
preconditioner context) is realised with comparably less effort than for other
discretisations makes DG very appealing for geodynamics applications given the very nature
of mantle-lithosphere-crust systems.
The inf-sup constants of stable DG discretisations are further not sensitive to
the element aspect ratio which is highly desirable, for example, within a crustal
scale model with a domain spanning $1000 \times 1000 \times 20$ km where the domain
itself possesses a high aspect ratio, or if anisotropic refinement was employed
\cite[Theorem 9]{schotzau_mixed_2004}.

Regarding the solution of the equation system arising from symmetric
interior penalty DG (SIP) \cite{arnold_sipg_1982,arnold_unified_2002}
based discretisations of incompressible Stokes flow, we note that for
$H(\text{div};\Omega)$-conforming discretisations, efficient preconditioners have been introduced very recently
\cite{ayuso_de_dios_simple_2014} \cite{kanschat_multigrid_2015}.
Recent advances in developing efficient and robust solvers for interior penalty
DG discretisations of second order elliptic problems with heterogeneous 
coefficients involve the algebraic multigrid preconditioner proposed in
\cite{bastian_algebraic_2012,bastian_fully-coupled_2014},
as well as the twolevel methods proposed in
\cite{dobrev_twolevel_2006} and
\cite{van_slingerland_fast_2014,van_slingerland_scalable_2015}.

To the best of our knowledge, employing DG
methods for heterogeneous Stokes flow problems in geodynamics was so
far only considered in \cite{lehmann_comparison_2015}.
Preconditioning was not discussed there.
\subsection{Contributions}
We examine the applicability of using mixed SIP based Stokes discretisations
  for studying heterogeneous, incompressible Stokes flow
  problems associated with prototype problems arising in geodynamics.
  Through comparison with an analytic solution employing a discontinuous
  viscosity structure, we numerically demonstrate that the discretisation
  yields optimal order of accuracy for $Q^2_k$--$Q_{k-1}$ elements
  for $k = 1, \dots, 6$.
  
Our main contribution is the development of a preconditioned
  iterative method for the discrete saddle point system resulting from the Stokes
  discretisation. To this end, we follow a block preconditioning approach; e.g.
  \cite{elman_finite_2014}.
  The nested viscous problem is solved by a
  $hp$-multilevel preconditioned Krylov subspace method.
  For the $p$-coarsening, a twolevel method
  utilising element-block Jacobi preconditioned iterations as a smoother
  is employed. Two $p$-coarse spaces are considered:
  the space of element-wise constants and the space
  of continuous, element-wise bilinear functions
  Through numerical experiments with
  heterogeneous viscosity, we demonstrate that the 
  variant utilising the element-wise
  bilinear coarse space has a convergence rate which is 
  independent of the number of elements, 
  largely insensitive to the jump in viscosity, and only weakly dependent on the
  polynomial order.

The heterogeneous nature of the viscosity in geodynamics applications
  requires a careful choice of the SIP penalty parameters.
  We provide a brief analysis of the influence of the
  penalty parameters on the discretisation error as well as on the quality of
  the element-block Jacobi smoothers.
\subsection{Limitations}
We restrict ourselves to linear problems with element-wise constant viscosity
distributions in this study.
   
\section{Governing equations} \label{sec:governing_equations}
Neglecting nonlinearities and the effect of temperature on the
viscosity, we restrict ourselves to a model
that is solely depending on the material composition of
the rocks in the Earth's mantle:
\begin{subequations}
\begin{align}
\label{eq:governing_equations:momentum_balance}
-\textbf{div}(2\,\eta(\theta)\,\strain{\uvec}) + \nabla\pscalar &=
\vec{f}(\theta) &&\inDomain,
\\
\label{eq:governing_equations:incompressibility_constraint}
\text{div}(\uvec) &= 0
&&\inDomain,
\end{align}
where $\uvec$ is a velocity field, $p$ is a
pressure, $\eta$ denotes the viscosity, $\vec{f}$ denotes a volumetric force,
$\theta$ denotes the
material composition with $0\leq\theta\leq1$, and $\strain{\uvec}$ denotes the
(linearised) strain rate tensor with
$\strainT{\uvec}{ij}=\frac{1}{2}(\pddx{j}{\uvecel{i}}+\pddx{i}{\uvecel{j}})$,
$i,j=1,\ldots,d$.
Further, $\domain \subset \mathbb{R}^2$, denotes a rectangular domain
with boundary $\boundary=\overline{\boundaryM\cup\boundaryN}$
consisting of a Neumann part $\boundaryN$, and a Navier part $\boundaryM$.
We require the velocity and pressure to satisfy homogeneous Neumann boundary
conditions,
\begin{align}
\label{eq:governing_equations:neumann_bc}
(2\,\eta\,\strain{\uvec} - \pscalar\identity)\normal &= 0
&&\onBoundaryN,		
\intertext{as well as homogeneous Navier
boundary conditions,}
\label{eq:governing_equations:navier_dirichlet_bc}
\uvec\cdot\normal &= 0,\qquad\tangential\cdot 2\,\eta\,\strain{\uvec}\normal
= 0 && \onBoundaryM,
\end{align} 
where $\normal$ denotes the outward normal to the boundary, and
$\tangential$ denotes a vector belonging to the tangential space of
$\boundary$. 
In case the Neumann boundary is empty, we ensure uniqueness of the pressure solution by enforcing the following constraint:
\begin{align}
\label{eq:governing_equations:pressure}
\intDomain p\,\dV=0.
\end{align}
\end{subequations}
We further require that the domain $\domain$ is restrained against rigid
motions
\namedlabel{itm:governing_equations:assumption_rigid_motions}{(A1)}. 

Let us denote by $\lebesgueTwo{\domain}$ and $\sobolev{1}{\domain}$
the usual Sobolev spaces and by
$\normLebesgueTwo{\cdot}{\domain}$
and
$\normSobolev{\cdot}{1}{\domain}$
their norms.

Let us assume that the viscosity
$\eta\in\lebesgueTwo{\domain}$ is bounded according to
\begin{align}
\label{eq:stokes:assumption_eta_bounds}
0 < \etaMin \leq \eta \leq \etaMax,
\end{align}
\namedlabel{itm:stokes:assumption_eta_bounds}{(AS1)},
and that $\vec{f}\in\lebesgueTwo{\domain}^2$
\namedlabel{itm:stokes:assumption_force}{(AS2)},
then it is well-known that problem
\eqref{eq:governing_equations:momentum_balance} --
\eqref{eq:governing_equations:navier_dirichlet_bc}
s.t. to the other named constraints admits a unique weak solution; see 
e.g.~\cite{brezzi_mixed_1991}. 
\section{Computational grid and trace operators}
\label{sec:grid}
Let $\tria$ be a regular 
Cartesian grid on $\domain$
\namedlabel{itm:grid:assumption_grid}{(Ah1)}.
We refer to the disjoint open sets $\cell \in \tria$ as elements and denote
their diameter by $h$. The number of elements is denoted by $N_\cell$.
Finally, $\normal$ denotes the outward normal unit vector to the element boundary $\cellBnd$.

An interior face of $\tria$ is the $d-1$ dimensional intersection
$\cellBnd^{+} \cap \cellBnd^{-}$, where $\cellPlus$ and $\cellMinus$
are two adjacent elements of $\tria$. Similarly, a boundary face of $\tria$ is
the $d-1$ dimensional intersection $\cellBnd \cap \boundary$ which consists
of entire faces of $\cellBnd$.
We denote by $\internalFaces$ the union of all interior faces of $\tria$ ,
by $\skeletonN$, and $\skeletonM$ the union of all boundary
faces belonging to the Neumann part, and the Navier part
of the boundary, respectively, and set $\skeleton = \internalFaces \cup \skeletonN \cup
\skeletonM$.
Here and in the following, we refer generically to a ``face" although we
consider only two-dimensional problems in this paper.
%
%

Let $\cell \in \tria$; we denote by $\sobolev{s}{K}$ the space of
real-valued functions $\scal{v} \in \lebesgueTwo{K}$ such that the function $\scal{v}$ and its
weak derivatives up to order $s$ are measurable
and square integrable in $K$.
We will denote the norms on all three spaces $\sobolev{s}{K}$,
$\sobolev{s}{K}^2$, and $\sobolev{s}{K}^{2 \times 2}$ by the symbol
$\norm{\cdot}_{s,K}$. We further
introduce the
broken Sobolev space
\begin{align}
\sobolev{s}{\tria} &= \{ \scal{v} \in \lebesgueTwo{\domain} \colon\,
\scal{v}|_{\cell} \in \sobolev{s}{\cell},\,\cell \in \tria\}.
\end{align}
Let us denote the norms of $\sobolev{s}{\tria}$, $\sobolev{s}{\tria}^2$,
and $\sobolev{s}{\tria}^{2 \times 2}$
by the symbol
$\norm{\cdot}_{\sobolev{s}{\tria}}$.

Let $q\in\sobolev{1}{\tria}$, and $\varphi$ either belong to
$\sobolev{1}{\tria}$, $\sobolev{1}{\tria}^2$, or
$\sobolev{1}{\tria}^{2 \times 2}$.
Let $\face \in \internalFaces$ be an interior face shared by
the elements $\cellPlus$ and $\cellMinus$.
Let $\varphi^{\pm}$ and $q^{\pm}$ denote the traces of $\varphi$ and $q$ on
$\face$ from the interior of $\cell^{\pm}$, respectively. 
Further, let $\normal^{\pm}$ denote the outward
normal unit vector to the boundary $\cellBnd^{\pm}$.
We define the mean value $\avg{\varphi}$ 
and the jump $\jmp{q\normal}$ at $\vec{x} \in \face$ by
\begin{align}
\avg{\varphi} &= \frac{1}{2} (\varphi^{+} + \varphi^{-}),\quad
\jmp{q \normal} = q^{+} \normal^{+} +
q^{-} \normal^{-}.
\end{align}
Let $\vec{w}$ denote a vector-valued function in
$\sobolev{1}{\tria}^2$,
and $\vec{w}^{\pm}$ denote its traces on $\face$ from the
interior of $\cell^{\pm}$. We define the jumps $\jmp{\vec{w}
\otimes \normal}$ and $\jmp{\vec{w}
\cdot \normal}$ at $\vec{x} \in \face$ by
\begin{align}
\jmp{\vec{w}\otimes \normal} &= \vec{w}^{+}\otimes\normal^{+} +
\vec{w}^{-}\otimes\normal^{-},&
\jmp{\vec{w}\cdot \normal} &= \vec{w}^{+}\cdot\normal^{+} +
\vec{w}^{-}\cdot\normal^{-},
\end{align}
where ``$\otimes$'' denotes the dyadic product.
\section{Discretisation of the Stokes problem}
\label{sec:discretisation_stokes}
Let us introduce $\etaCell{\cell}=\eta|_\cell$,
$\cell\in\tria$, and define:
\begin{align}
\label{eq:stokes_discretisation:eta_face}
\etaFace &= \begin{cases}
\max \left\{ \etaMaxCell{\cellPlus}, \etaMaxCell{\cellMinus}\right\} &
\face\in\internalFaces,\\
\etaMaxCell{\cell} & \face\in\skeletonM.
\end{cases}
\end{align}
For simplicity, we assume here that the viscosity is element-wise
constant \namedlabel{itm:stokes_discretisation:assumption_eta}{(ASh1)}.
Let us additionally define the constants
\begin{align}
\label{eq:stokes_discretisation:const_trace_face}
\constTraceFace &=
\begin{cases}
\max\left\{\constTraceCell{\cellPlus},\constTraceCell{\cellMinus}\right\} &
\face\in\internalFaces, \\
\constTraceCell{\cell} & \face\in\skeletonM.
\end{cases}
\end{align}
where the constants $\constTraceCell{\cell}$, $\cell\in\tria$, stem 
from the following discrete trace inequality \cite{hillewaert_development_2013}:
\begin{lemma}[Discrete trace inequality]
\label{thm:stokes_discretisation:discrete_trace}
Let $K$ be an affine quadrilateral,
and let $e$ be an edge belonging to the boundary of $K$.
Then, it holds that
\begin{align}
\label{eq:stokes_discretisation:discrete_trace}
\norm{\varphi_h}_{\lebesgueTwo{e}}^2 &\leq
\constTraceCell{\cell}\,
\norm{\varphi_h}_{\lebesgueTwo{K}}^2,\qquad\forall\varphi_h\in Q_k(K),
\intertext{with the trace inequality constant}
\label{eq:stokes_discretisation:discrete_trace_constant}
\constTraceCell{\cell} &=
(k+1)^2
\,\frac{|e|}{|K|}.
\end{align}
\end{lemma}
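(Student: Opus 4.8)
The plan is to reduce the estimate to a reference configuration in two stages — an affine pull-back to the reference square $\hat K = [-1,1]^2$, followed by a reduction to one spatial dimension using the tensor-product structure of $Q_k$ — so that the entire content of the lemma is concentrated in a sharp one-dimensional trace inequality for univariate polynomials. I regard $Q_k(K)$ as the set of functions $\varphi_h$ whose pull-back $\hat\varphi_h = \varphi_h \circ F$ under the affine map $F:\hat K \to K$ lies in $Q_k(\hat K)$; this is precisely what makes the tensor-product argument available on the reference element even when $K$ is a genuine parallelogram rather than a rectangle.

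First I would establish the one-dimensional estimate: for every univariate polynomial $p$ of degree at most $k$,
\[
 p(1)^2 \le \tfrac{(k+1)^2}{2} \int_{-1}^{1} p(x)^2 \, dx .
\]
Expanding $p = \sum_{n=0}^{k} c_n P_n$ in the Legendre basis normalised by $P_n(1)=1$ and $\int_{-1}^1 P_n^2 = \tfrac{2}{2n+1}$, one has $p(1) = \sum_n c_n$ and $\int_{-1}^1 p^2 = \sum_n \tfrac{2}{2n+1} c_n^2$. Applying Cauchy--Schwarz to $\sum_n c_n = \sum_n \bigl(\sqrt{2/(2n+1)}\,c_n\bigr)\sqrt{(2n+1)/2}$ and using the arithmetic identity $\sum_{n=0}^{k}(2n+1) = (k+1)^2$ yields the stated constant $\tfrac{(k+1)^2}{2}$. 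This is the step I expect to carry the whole result: it is where the factor $(k+1)^2$ is produced, and the choice of the Legendre basis together with the endpoint normalisation $P_n(1)=1$ is exactly what makes the Cauchy--Schwarz bound sharp.

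Next I would tensorise. Taking $\hat e = \{1\}\times[-1,1]$, for fixed $y$ the slice $x \mapsto \hat\varphi_h(x,y)$ is a degree-$k$ polynomial, so the one-dimensional estimate gives $\hat\varphi_h(1,y)^2 \le \tfrac{(k+1)^2}{2}\int_{-1}^1 \hat\varphi_h(x,y)^2\,dx$; integrating in $y$ and invoking Fubini produces
\[
 \|\hat\varphi_h\|_{L^2(\hat e)}^2 \le (k+1)^2\,\tfrac{|\hat e|}{|\hat K|}\,\|\hat\varphi_h\|_{L^2(\hat K)}^2 ,
\]
since $|\hat e|/|\hat K| = 2/4 = 1/2$. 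This already has the geometric form claimed in the lemma on the reference element.

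Finally I would transfer back to $K$ by the affine change of variables $F(\hat x)=B\hat x + b$. The volume and edge Jacobians give $\|\varphi_h\|_{L^2(K)}^2 = |\det B|\,\|\hat\varphi_h\|_{L^2(\hat K)}^2$ and $\|\varphi_h\|_{L^2(e)}^2 = (|e|/|\hat e|)\,\|\hat\varphi_h\|_{L^2(\hat e)}^2$, where $|e|/|\hat e|$ is the length ratio of the (straight) edges. Substituting these into the reference estimate and using $|K| = |\hat K|\,|\det B|$ causes the reference measures $|\hat e|$ and $|\hat K|$ to cancel, leaving exactly $\|\varphi_h\|_{L^2(e)}^2 \le (k+1)^2\,(|e|/|K|)\,\|\varphi_h\|_{L^2(K)}^2$, that is, $\constTraceCell{\cell} = (k+1)^2\,|e|/|K|$. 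This last stage is purely a bookkeeping of Jacobians and I expect no difficulty there; the only point worth checking is that on an affine quadrilateral each edge is mapped with a constant length-scaling, which holds because $F$ is affine.
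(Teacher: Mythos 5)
Your proposal is correct: the one-dimensional Legendre/Cauchy--Schwarz bound with $\sum_{n=0}^{k}(2n+1)=(k+1)^2$, the tensor-product slicing, and the affine Jacobian bookkeeping all hold, and together they yield exactly the constant $(k+1)^2\,|e|/|K|$. The paper itself offers no proof of this lemma -- it simply cites Hillewaert's thesis -- and your argument is precisely the sharp-constant derivation given in that reference, so it coincides with the intended proof.
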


We approximate velocity and pressure in the discontinuous finite
element spaces
\begin{align}
\spaceVhf &= \left\{ \vvec \in \lebesgueTwo{\domain}^2\colon\,\,
\vvec|_{\cell} \in Q_{\kcell}(\cell)^2,\,\cell \in \tria \right\},
\\
\spaceQhf &= 
\begin{cases}
\left\{ \qscalar \in \lebesgueTwo{\domain}\colon\,\,\qscalar|_{\cell}
\in Q_{\kcell-1}(\cell),\,\cell \in \tria \right\} & |\boundaryN| > 0,  \\
\left\{ \qscalar \in \lebesgueTwo{\domain}\setminus
\mathbb{R}\colon\,\,\qscalar|_{\cell} \in Q_{\kcell-1}(\cell),\,\cell \in \tria
\right\} & \text{else},  \\
\end{cases}
\end{align}
where $Q_k(K)$ is the space of polynomials of maximum degree $k$ in each
variable on the mesh cell $K\in\tria$.

As approximation to \eqref{eq:governing_equations:momentum_balance} --
\eqref{eq:governing_equations:pressure},
we then consider the problem of finding $\vec{\uhvec}\in\spaceVhf$ and
$\phscalar\in\spaceQhf$ such that:
\begin{align}
\label{eq:stokes_discretisation:discrete_momentum_balance}
\dgformA{\uhvec}{\vhvec} + \dgformB{\vhvec}{\phscalar} &= \dgformF{\vhvec}, &&
\forall \vhvec \in \spaceVhf,\\
\label{eq:stokes_discretisation:discrete_incompressible_constraint}
\dgformB{\uhvec}{\qhscalar} \phantom{\,\,\,\,\,\,+ \dgformB{\vhvec}{\pscalar}}
&= \dgformG{\qhscalar}, && \forall \qhscalar \in \spaceQhf,
\end{align}
where we use a SIP form $\mathcal{A}_h$, and a form
$\mathcal{B}_h$ similar to the one used in \cite{toselli_hp_2002}:
\begin{align}
\label{eq:stokes_discretisation:stiffness_form}
\dgformA{\uhvec}{\vhvec}  =
\sumCells
\intCell 2\,\eta\,\strain{\uhvec} : \strain{\vhvec}\,\dV
\notag
\\
-
\sumInternalFaces \intFace \avg{2\,\eta\,\strain{\uhvec}} : \jmp{\vhvec \otimes
\normal}\,\ds
&-\sumInternalFaces\intFace \avg{2\,\eta \,\strain{\vhvec}} :
\jmp{\uhvec \otimes \normal}
\,\ds
\\
- \sumSkeletonM\intFace (\normal\cdot 2\,\eta \,\strain{\uhvec}\normal)\,
(\vhvec \cdot \normal)\,\ds
&-\sumSkeletonM\intFace (\normal\cdot 2\,\eta
\,\strain{\vhvec}\normal)\, (\uhvec \cdot \normal)
\,\ds
\notag\\
+ \sumInternalFaces \penalt\intFace
\jmp{\uhvec\otimes\normal}:\jmp{\vhvec\otimes\normal}\,\ds
&+ \sumSkeletonM \penalt\intFace
(\uhvec\cdot\normal)\,(\vhvec\cdot\normal)\,\ds,\notag
\end{align}
and
\begin{align}
\label{eq:stokes_discretisation:divergence_form}
\dgformB{\uhvec}{\qhscalar} &=
\sumCells \intCell
-\text{div}(\uhvec)\,\qhscalar\,\dV
+ \sumInternalFaces \intFace \avg{\qhscalar}\,\jmp{\uhvec\cdot \normal}\,\ds\\
&+ \sumSkeletonM \intFace \qhscalar\,(\uhvec\cdot \normal)\,\ds
\notag,
\\
\label{eq:stokes_discretisation:rhs_f}
\dgformF{\vhvec} &= \sumCells\intCell \vec{f}\cdot\vhvec\,\dV
,
\qquad
\dgformG{\qhscalar} = 0,
\end{align}
with $\uhvec,\vhvec\in\spaceVhf$, $\qhscalar\in\spaceQhf$,
and the face-wise penalties $\penalt=\penal\,\constTraceFace$,
$\face\in\skeleton$.
The parameters $\penal$ are the so-called penalty or stability parameters that
must be chosen sufficiently large (to be specified below) to guarantee that the
bilinear form $\dgformAs$ is coercive on the discrete space $\spaceVhf$.

Consistency of the discrete variational problem can be shown by following
the proof of \cite[Lemma 7.5.]{toselli_hp_2002}.
\subsection{Stability of the Stokes discretisation}
\label{sec:stokes:discrete_stability}
For the analysis of the Stokes discretisation, it is necessary to introduce the
functionals
\begin{align}
\label{eq:grid:norm_0}
\normZero{\qscalar}^2 &=  \sumCells\normLebesgueTwo{\qscalar}{\cell}^2 +
\sumInternalFaces \kface^{-2}\,h\,\normLebesgueTwo{\avg{\qscalar}}{\face}^2
+
\sumSkeletonM \kface^{-2}\,h\,\normLebesgueTwo{{\qscalar}}{\face}^2,
\\
\label{eq:grid:norm_h}
\normh{\vvec}^2 &=
\sumCells\norm{\nabla\vvec}_{\lebesgueTwo{\cell}}^2
+ \sumInternalFaces
\kface^2\,h^{-1}\,\norm{\jmp{\vvec\otimes\normal}}_{\lebesgueTwo{\face}}^2,
\\
&+ \sumSkeletonM
\kface^2\,h^{-1}\,\norm{\vvec\cdot\normal}_{\lebesgueTwo{\face}}^2,
\notag
\\
\label{eq:grid:norm_hh}
\normhh{\vvec}^2 &=
\normh{\vvec}^2
+
\sumInternalFaces \kface^{-2}\,h\,
\normLebesgueTwo{\avg{\nabla\,\vvec}}{\face}^2
\sumSkeletonM \kface^{-2}\,h\,
\normLebesgueTwo{\nabla\,\vvec}{\face}^2,
\end{align}
with $\vvec\in\sobolev{2}{\tria}^2$ and
$\qscalar\in\lebesgueTwo{\domain}$.
By definition, $\normZero{\cdot}$ is a norm
on $\lebesgueTwo{\domain}$.
Under assumption \ref{itm:governing_equations:assumption_rigid_motions},
we assume that $\normh{\cdot}$ is a norm
on $\sobolev{2}{\tria}^2$ for the considered boundary conditions.
\namedlabel{itm:stokes_discretisation:assumption_norm_h}{(ASh2)}.

Stability of the Stokes discretisation follows from the
discrete inf-sup stability of $\mathcal{B}_h$, the discrete coercivity of
$\mathcal{A}_h$, and 
Brezzi's lemma:
\begin{lemma}[Discrete inf-sup
stability\label{thm:stokes_discretisation:discrete_inf_sup}]
Let the assumptions \ref{itm:grid:assumption_grid}
be satisfied and let $\kcell \geq 1$, $\cell\in\tria$. Then, it holds that
\begin{align}
\sup_{\vec{0} \neq \uhvec\in \spaceVhf}
\frac{\dgformB{\uhvec}{\qhscalar}}{\normh{\uhvec}} &\geq
\beta_h\,\norm{\qhscalar}_{\lebesgueTwo{\domain}}
=c\,k^{-1}\,\norm{\qhscalar}_{\lebesgueTwo{\domain}},&&
\forall \qhscalar \in\spaceQhf,
\end{align}
with $c > 0$ independent of $h$ and $k$.
\end{lemma}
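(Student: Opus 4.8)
\emph{Proof plan.} The plan is to prove the estimate by the classical Fortin argument: couple the continuous inf--sup condition for the Stokes problem with a projection onto $\spaceVhf$ whose $\normh{\cdot}$-stability constant grows at most linearly in $k$. As a preparation I would rewrite $\mathcal{B}_h$ in integrated-by-parts form. Applying the divergence theorem elementwise and regrouping the element-boundary integrals over faces by the standard DG product rule, the interior contribution $\jmp{\vvec\cdot\normal}\avg{\qhscalar}$ and the entire $\skeletonM$ contribution cancel against the face terms already present in \eqref{eq:stokes_discretisation:divergence_form}, leaving for every $\vvec\in\sobolev{1}{\tria}^2$
\begin{align*}
\dgformB{\vvec}{\qhscalar}=\sumCells\intCell\vvec\cdot\nabla\qhscalar\,\dV-\sumInternalFaces\intFace\avg{\vvec}\cdot\jmp{\qhscalar\normal}\,\ds-\sum_{\face\in\skeletonN}\intFace(\vvec\cdot\normal)\,\qhscalar\,\ds,
\end{align*}
in which the pressure enters only through $\nabla\qhscalar$ on the elements and through $\jmp{\qhscalar\normal}$ and the $\skeletonN$-traces of $\qhscalar$ on the faces.

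Next I would invoke the continuous inf--sup condition underlying the well-posedness already asserted for \eqref{eq:governing_equations:momentum_balance}--\eqref{eq:governing_equations:navier_dirichlet_bc}: under \ref{itm:governing_equations:assumption_rigid_motions}, to each $\qhscalar\in\spaceQhf$ there corresponds $\vvec\in\mathbf{V}:=\{\vec{w}\in\sobolev{1}{\domain}^2:\vec{w}\cdot\normal=0\ \text{on}\ \boundaryM\}$ with $-\text{div}(\vvec)=\qhscalar$ and $\normSobolev{\vvec}{1}{\domain}\leq C\,\normLebesgueTwo{\qhscalar}{\domain}$; when $\boundaryN=\emptyset$ the mean-zero constraint $\intDomain\qhscalar\,\dV=0$ built into $\spaceQhf$ guarantees solvability. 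Since such a $\vvec$ is single-valued, the interior jumps $\jmp{\vvec\cdot\normal}$ vanish in \eqref{eq:stokes_discretisation:divergence_form} and $\vvec\cdot\normal=0$ on $\boundaryM$ removes the Navier face term, so $\dgformB{\vvec}{\qhscalar}=\intDomain(-\text{div}(\vvec))\,\qhscalar\,\dV=\normLebesgueTwo{\qhscalar}{\domain}^2$.

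The crux is a Fortin operator $\Pi_h\colon\mathbf{V}\to\spaceVhf$ with (i) $\dgformB{\Pi_h\vvec-\vvec}{\qhscalar}=0$ for all $\qhscalar\in\spaceQhf$, and (ii) $\normh{\Pi_h\vvec}\leq C\,k\,\normSobolev{\vvec}{1}{\domain}$. Reading (i) off the integrated-by-parts form, it suffices to prescribe $\Pi_h$ elementwise on the reference square by the moment conditions $\intCell(\Pi_h\vvec-\vvec)\cdot\nabla q\,\dV=0$ for all $q\in Q_{\kcell-1}(\cell)$ and $\intFace((\Pi_h\vvec-\vvec)\cdot\normal)\,\mu\,\ds=0$ for every $\mu$ of degree at most $\kcell-1$ on each face $\face\subset\cellBnd$ (imposed from within $\cell$); the face conditions simultaneously annihilate the interior $\avg{\vvec}\cdot\jmp{\qhscalar\normal}$ moments and the $\skeletonN$-trace moments. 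A dimension count shows the $(k^2-1)+4k$ conditions are compatible with the $2(k+1)^2$ degrees of freedom of $Q_{\kcell}(\cell)^2$, so such a local operator exists. For (ii) I would represent $\Pi_h\vvec$ as a stable polynomial approximation of $\vvec$ plus face corrections and estimate each piece on the reference element through the discrete trace inequality of Lemma~\ref{thm:stokes_discretisation:discrete_trace} (whose constant scales like $(k+1)^2$) and polynomial inverse inequalities; after rescaling against the weights $\kface^2h^{-1}$ appearing in \eqref{eq:grid:norm_h}, this produces exactly one surplus power of $k$.

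With (i)--(ii) in hand the Fortin lemma yields the claim upon choosing $\uhvec=\Pi_h\vvec$:
\begin{align*}
\sup_{\vec{0}\neq\uhvec\in\spaceVhf}\frac{\dgformB{\uhvec}{\qhscalar}}{\normh{\uhvec}}\geq\frac{\dgformB{\Pi_h\vvec}{\qhscalar}}{\normh{\Pi_h\vvec}}=\frac{\dgformB{\vvec}{\qhscalar}}{\normh{\Pi_h\vvec}}=\frac{\normLebesgueTwo{\qhscalar}{\domain}^2}{\normh{\Pi_h\vvec}}\geq\frac{\normLebesgueTwo{\qhscalar}{\domain}^2}{C\,k\,\normSobolev{\vvec}{1}{\domain}}\geq c\,k^{-1}\,\normLebesgueTwo{\qhscalar}{\domain},
\end{align*}
the last step using $\normSobolev{\vvec}{1}{\domain}\leq C\,\normLebesgueTwo{\qhscalar}{\domain}$. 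I expect the main obstacle to be step (ii): establishing the \emph{sharp} linear-in-$k$ growth of $\normh{\Pi_h\vvec}$ requires the full $hp$ polynomial approximation and trace machinery on the reference element, and pinning down the exact power of $k$ --- rather than a coarser bound --- is precisely what produces the stated constant $c\,k^{-1}$.
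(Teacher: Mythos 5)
The paper does not actually prove this lemma: its ``proof'' is a citation to \cite[Theorem 6.2]{schotzau_mixed_2002} (for $k\geq 2$) and \cite[Theorem 6.12]{schotzau_mixed_2002} (for $k=1$), where the $k^{-1}$ bound is established by a substantial $hp$-analysis. Your proposal therefore necessarily takes a different, self-contained route, and its preparatory steps are correct: the integrated-by-parts form of $\dgformBs$, the use of the continuous inf-sup condition (surjectivity of the divergence) to produce $\vvec\in\sobolev{1}{\domain}^2$ with $\vvec\cdot\normal=0$ on $\boundaryM$ and $\dgformB{\vvec}{\qhscalar}=\normLebesgueTwo{\qhscalar}{\domain}^2$, and the reduction of the lemma to the existence of a Fortin operator with properties (i)--(ii) are all sound, as is the verification that your moment conditions imply (i).

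The genuine gap is that (i)--(ii), and above all (ii), are asserted rather than proved, and they \emph{are} the lemma. First, a dimension count does not establish existence of $\Pi_h$: an underdetermined constraint system is solvable for arbitrary data only if the constraint functionals are linearly independent on $Q_{\kcell}(\cell)^2$, which requires an argument (it is true, e.g.\ by testing with fields such as $(x(1-x)\,\phi,0)$ and $(x\,B(y),0)$ on the reference square, but you do not give it). Second, and decisively, the stability bound $\normh{\Pi_h\vvec}\leq C\,k\,\normSobolev{\vvec}{1}{\domain}$ with $C$ independent of $k$ is, by Fortin's lemma and its converse, \emph{equivalent} to the inf-sup condition with constant of order $k^{-1}$; asserting it assumes exactly what is to be proven, so the argument is circular at its core. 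Your sketched mechanism for (ii) also does not withstand scrutiny of the $k$-bookkeeping: an \emph{unconstrained} $hp$-quasi-interpolant $\pi$ satisfies $\norm{\vvec-\pi\vvec}_{\lebesgueTwo{\cellBnd}}^2\lesssim (h/k)\,\normSobolev{\vvec}{1}{\cell}^2$, which against the face weight $\kface^2h^{-1}$ in \eqref{eq:grid:norm_h} yields only a $k^{1/2}$ loss, i.e.\ it would give an inf-sup constant of order $k^{-1/2}$ --- better than the cited result. The extra half power must be paid by the face/interior moment corrections that enforce (i), whose $k$-dependent stability (the conditioning of the local constrained moment problems) is precisely the hard part of the $hp$-analysis and is nowhere estimated; the claim that the procedure ``produces exactly one surplus power of $k$'' is reverse-engineered from the statement rather than derived. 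As it stands, the proposal is a correct reduction of the lemma to an unproven claim of equal difficulty, not a proof.
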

\begin{proof} This follows from \cite[Theorem
6.2]{schotzau_mixed_2002} for $k\geq 2$. The case $k=1$ is covered by
\cite[Theorem 6.12.]{schotzau_mixed_2002}.
\end{proof}

We rely on a discrete Korn inequality to
show coercivity of the bilinear form $\dgformAs$.
\begin{lemma}[Discrete Korn
inequality,\cite{brenner_korns_2004}\label{thm:stokes_discretisation:discrete_korn}]
Let the assumptions \ref{itm:governing_equations:assumption_rigid_motions} and
\ref{itm:grid:assumption_grid}
be satisfied. Then, it holds for all $\vvec \in \sobolev{1}{\tria}^2$ that
\begin{align}
\label{eq:stokes_discretisation:discrete_korn}
\left(
\norm{\strain{\vvec}}_{\lebesgueTwo{\domain}}^2
+
\sum_{\face \in \internalFaces}
h^{-1}\,\norm{\jmp{\vvec \otimes \normal}}_{\lebesgueTwo{\face}}^2
\right)
&\geq
\constKornDisc\,\norm{\nabla\vvec}_{\lebesgueTwo{\domain}}^2
\end{align}
with a constant $\constKornDisc>0$ independent of $h$.
\end{lemma}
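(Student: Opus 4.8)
Because the left-hand side controls the broken strain together with the tangential and normal mismatches across interior faces, the statement is a piecewise--$H^1$ analogue of the classical second Korn inequality, and the plan is to reduce it to the conforming case. The fact I would start from is that, under assumption~\ref{itm:governing_equations:assumption_rigid_motions}, infinitesimal rigid motions are excluded, so the continuous second Korn inequality holds in seminorm form: there is a constant $C_K>0$, independent of $h$, with $\norm{\nabla\vec{w}}_{\lebesgueTwo{\domain}}^2 \leq C_K\,\norm{\strain{\vec{w}}}_{\lebesgueTwo{\domain}}^2$ for every $\vec{w}\in\sobolev{1}{\domain}^2$ compatible with the boundary conditions. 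The obstruction to applying this directly is that a generic $\vvec\in\sobolev{1}{\tria}^2$ is only \emph{piecewise} $H^1$: across $\internalFaces$ it may jump, and on those jumps $\strain{\vvec}$ carries no information about $\nabla\vvec$. The jump sum on the left is exactly the quantity that must compensate for this loss of conformity.

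Next I would split $\vvec$ into a globally continuous part and a nonconforming remainder. Concretely, I would introduce an averaging (enriching) operator $E_h$, of Oswald/Scott--Zhang type but acting on piecewise $H^1$ data, producing $\vvec_{c} := E_h\vvec \in \sobolev{1}{\domain}^2$ that respects the boundary conditions. For this operator I would aim to prove the nonconformity estimate
\[
\norm{\nabla(\vvec-\vvec_{c})}_{\lebesgueTwo{\domain}}^2
\;\leq\; C\sum_{\face\in\internalFaces} h^{-1}\,\norm{\jmp{\vvec\otimes\normal}}_{\lebesgueTwo{\face}}^2 ,
\]
which states that the part of $\vvec$ failing to be continuous is measured precisely by the jumps. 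Since $\strain{\cdot}$ is bounded linearly by $\nabla(\cdot)$, this immediately yields the companion consistency bound $\norm{\strain{\vvec_{c}}}_{\lebesgueTwo{\domain}}^2 \leq C\big(\norm{\strain{\vvec}}_{\lebesgueTwo{\domain}}^2 + \sum_{\face\in\internalFaces} h^{-1}\norm{\jmp{\vvec\otimes\normal}}_{\lebesgueTwo{\face}}^2\big)$.

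With these two estimates in hand, the conclusion follows by assembly. The triangle inequality gives $\norm{\nabla\vvec}_{\lebesgueTwo{\domain}} \leq \norm{\nabla\vvec_{c}}_{\lebesgueTwo{\domain}} + \norm{\nabla(\vvec-\vvec_{c})}_{\lebesgueTwo{\domain}}$; I would bound the first term by continuous Korn followed by the consistency estimate, and the second term by the nonconformity estimate. Both are then controlled by $\norm{\strain{\vvec}}_{\lebesgueTwo{\domain}}^2$ plus the jump sum, and collecting constants into a single $\constKornDisc>0$ delivers the claim.

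I expect the crux to be the construction of $E_h$ and the nonconformity estimate for \emph{arbitrary} piecewise $H^1$ fields rather than finite element functions. The familiar proof of such bounds for polynomial data leans on inverse inequalities on each element, which are unavailable here; instead one must pass to a fixed reference configuration, use a genuine trace theorem in place of an inverse estimate, and track that the only obstruction to continuity of $\vvec$ across a face is the combined normal--tangential mismatch encoded in $\jmp{\vvec\otimes\normal}$. The $h$-independence of $\constKornDisc$ would then follow from the homogeneity of each term under the scaling to the reference mesh, together with assumption~\ref{itm:grid:assumption_grid}. This reference-element analysis is the technical heart carried out in~\cite{brenner_korns_2004}, which is why we invoke that result directly.
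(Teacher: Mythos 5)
Your proof hinges on the nonconformity estimate
\[
\norm{\nabla(\vvec-E_h\vvec)}_{\lebesgueTwo{\domain}}^2
\;\leq\; C\sum_{\face\in\internalFaces} h^{-1}\,\norm{\jmp{\vvec\otimes\normal}}_{\lebesgueTwo{\face}}^2 ,
\]
and this is precisely the step that fails: no operator $E_h$ with this property can exist on all of $\sobolev{1}{\tria}^2$. The minimal broken $H^1$ energy needed to remove the discontinuity of $\vvec$ is, up to constants, the $H^{1/2}$ \emph{seminorm} of the jump, and that is not controlled by the $L^2$ norm of the jump. Concretely, take two unit elements sharing the face $\face=\{0\}\times(0,1)$, and set $\vvec=\vec{0}$ on one element and $\vvec=(\epsilon\sin(Ny),0)$ on the other. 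The right-hand side above is of order $\epsilon^2$, independent of $N$, whereas for \emph{every} $\vec{w}\in\sobolev{1}{\domain}^2$ one has
\[
\norm{\nabla(\vvec-\vec{w})}_{\lebesgueTwo{\domain}}^2 \;\gtrsim\;
\big|\jmp{\vvec\otimes\normal}\big|_{H^{1/2}(\face)}^2 \;\sim\; \epsilon^2 N \longrightarrow \infty ,
\]
since $\vvec-\vec{w}$ has the same jump as $\vvec$ and traces of $H^1$ element functions control the $H^{1/2}$ seminorm of that jump. The averaging (Oswald-type) estimates you have in mind are statements about piecewise \emph{polynomials} of fixed degree; their proofs rest on inverse inequalities and the constants degrade with the degree $k$. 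Your proposed substitute -- a trace theorem on the reference element -- goes in the wrong direction: it bounds traces by volume norms, while you need to bound a volume quantity by the $L^2$ norm of a trace. Note also that in your decomposition the jump terms alone must absorb the entire nonconformity (the strain term is never used for this), which is exactly what the counterexample rules out.

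The lemma is nevertheless true because the only fields invisible to the elementwise strain are rigid motions, a finite-dimensional, non-oscillatory space -- and this is what the argument in \cite{brenner_korns_2004}, which the paper's one-line proof invokes, actually exploits. There one approximates $\vvec$ on each $\cell\in\tria$ by its rigid-motion projection (the elementwise second Korn inequality bounds the remainder by $\norm{\strain{\vvec}}_{\lebesgueTwo{\cell}}$), and then controls the broken gradient of the resulting piecewise rigid-motion field: its inter-element differences live in a fixed finite-dimensional space on each face, so their $L^2$ jump norms are equivalent to any other norm, and a chaining (piecewise-constant Poincar\'e) argument over the mesh, combined with the global normalisation supplied by \ref{itm:governing_equations:assumption_rigid_motions}, finishes the proof. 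This is a genuinely different mechanism from conforming enrichment. Finally, if one only needs the inequality on the discrete space $\spaceVhf$ -- which is all that the coercivity proof of Lemma \ref{thm:stokes_discretisation:discrete_coercivity} uses -- your route can be repaired via inverse inequalities, but the resulting constant depends on $k$; that is strictly weaker than the statement (valid for all of $\sobolev{1}{\tria}^2$ with $\constKornDisc$ independent of the space) and would also undermine the claimed $h$- and $k$-independence of the coercivity constant $\dgalpha$.
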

\begin{proof}
This is a slight modification to the result in \cite{brenner_korns_2004} relying
on assumption \ref{itm:governing_equations:assumption_rigid_motions} as well
as on the affinity and shape regularity of the grid elements
\ref{itm:grid:assumption_grid}.
\end{proof}
\begin{lemma}[Discrete coercivity\label{thm:stokes_discretisation:discrete_coercivity}] Let the assumptions
\ref{itm:governing_equations:assumption_rigid_motions},
\ref{itm:grid:assumption_grid},
and
\ref{itm:stokes_discretisation:assumption_eta} --
\ref{itm:stokes_discretisation:assumption_norm_h} hold.
Assume that the penalty parameters are chosen according to
\begin{align}
\label{eq:stokes_discretisation:sipg_penalty}
\penal >
\nCellFaces\,\etaFace\,
(1+\stabilisationKorn)
\,
\begin{cases}
1\,&\face\in\internalFaces,\\
2\,&\face\in\skeletonM,
\end{cases} 
\end{align}
with $\etaFace$ from \eqref{eq:stokes_discretisation:eta_face},
and with $\nCellFaces=4$ denoting the number of faces of an element.
Then, it holds that 
\begin{align}
\dgformA{\vhvec}{\vhvec} &\geq \dgalpha\,\normh{\vhvec}^2,
\qquad\forall\vhvec\in\spaceVhf,
\end{align}
with a constant $\dgalpha = C\,2\,\etaMin >
0$ independent of $h$ and $k$. Here,
$\etaMin$ is from assumption
\ref{itm:stokes:assumption_eta_bounds}.
The parameter $\tau>0$ is a small value that is necessary
to use the discrete Korn inequality.
\end{lemma}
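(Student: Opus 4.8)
The plan is to adapt the classical coercivity argument for symmetric interior penalty methods (e.g.~\cite{arnold_unified_2002}) to the heterogeneous weight $\eta$ and to the Navier faces, and to close the estimate with the discrete Korn inequality of Lemma~\ref{thm:stokes_discretisation:discrete_korn}. First I would set $\uhvec=\vhvec$ in \eqref{eq:stokes_discretisation:stiffness_form}. The volume term $\sumCells\intCell 2\,\eta\,|\strain{\vhvec}|^2\,\dV$ and the two penalty sums are then manifestly non-negative, whereas the four consistency sums collapse into the two symmetric, sign-indefinite contributions
\[
-2\sumInternalFaces\intFace\avg{2\,\eta\,\strain{\vhvec}}:\jmp{\vhvec\otimes\normal}\,\ds
\qquad\text{and}\qquad
-2\sumSkeletonM\intFace(\normal\cdot 2\,\eta\,\strain{\vhvec}\normal)\,(\vhvec\cdot\normal)\,\ds.
\]
The overall strategy is to absorb these two sums into a fraction of the penalty sums together with a \emph{strictly positive} fraction of the strain term, retaining enough of each contribution to rebuild $\normh{\vhvec}^2$.

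Next I would estimate the consistency sums face by face, applying the Cauchy--Schwarz inequality followed by a weighted Young inequality whose parameter is chosen proportional to $\penalt^{-1}=(\penal\,\constTraceFace)^{-1}$, so that the resulting jump contribution is a prescribed fraction $\theta\in(0,1)$ of the local penalty. On interior faces the averaged factor is controlled via $|\avg{w}|^2\le\tfrac12(|w^{+}|^2+|w^{-}|^2)$ by the one-sided traces $(2\eta^{\pm})^2\,\normLebesgueTwo{\strain{\vhvec}}{\face}^2$, each of which the discrete trace inequality of Lemma~\ref{thm:stokes_discretisation:discrete_trace} converts into the volume norm $\constTraceCell{\cell^{\pm}}\,\normLebesgueTwo{\strain{\vhvec}}{\cell^{\pm}}^2$. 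Two features make the bookkeeping close: the ratios $\constTraceCell{\cell^{\pm}}/\constTraceFace\le 1$ cancel against the $\constTraceFace$ hidden in $\penalt$, which is precisely what renders the final constant independent of $k$; and the face viscosity $\etaFace\ge\eta^{\pm}$, together with $\eta\ge\etaMin$, lets me replace the heterogeneous weights by $\etaFace$ in the penalty and by $\etaMin$ in the strain lower bound. The Navier faces are handled identically, the only difference being that no averaging occurs there, so the factor $\tfrac12$ is absent; this is exactly what the factor $2$ in \eqref{eq:stokes_discretisation:sipg_penalty} for $\face\in\skeletonM$ compensates, making the per-face strain debit identical for interior and Navier faces.

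Invoking the penalty condition \eqref{eq:stokes_discretisation:sipg_penalty} with $\nCellFaces=4$, I would then sum over the at most four faces of each element. These together debit at most $\tfrac{1}{\theta(1+\stabilisationKorn)}$ of the element's strain coefficient $2\,\etaCell{\cell}$, leaving the surviving strain coefficient $2\,\etaCell{\cell}\big(1-\tfrac{1}{\theta(1+\stabilisationKorn)}\big)$ and the surviving jump/Navier coefficient $(1-\theta)\,\penalt$. The decisive point, which I expect to be the main obstacle, is that both survivors are strictly positive only when $\theta(1+\stabilisationKorn)>1$ \emph{and} $\theta<1$; such a $\theta$ exists precisely because $\stabilisationKorn>0$. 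This is the role of the small parameter $\stabilisationKorn$: one must spend part of the penalty budget to dominate the consistency terms yet keep a positive reserve of the strain term, and these two demands are reconcilable only for strictly positive $\stabilisationKorn$.

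Finally I would feed the retained strain term, bounded below by $2\,\etaMin\big(1-\tfrac{1}{\theta(1+\stabilisationKorn)}\big)\normLebesgueTwo{\strain{\vhvec}}{\domain}^2$, together with a portion of the retained interior jump terms into Lemma~\ref{thm:stokes_discretisation:discrete_korn} to produce $\constKornDisc\,\normLebesgueTwo{\nabla\vhvec}{\domain}^2$. Since $\penalt=\penal\,\constTraceFace\gtrsim\etaMin\,\kface^2\,h^{-1}$ on the Cartesian grid, the remaining interior and Navier jump terms dominate the face contributions to $\normh{\vhvec}^2$ in \eqref{eq:grid:norm_h}. Collecting the gradient and jump parts then yields $\dgformA{\vhvec}{\vhvec}\ge\dgalpha\,\normh{\vhvec}^2$ with $\dgalpha=C\,2\,\etaMin$, where $C$ gathers $\constKornDisc$, $\theta$ and $\stabilisationKorn$ but is independent of $h$ and $k$ by the cancellation of the trace constants noted above.
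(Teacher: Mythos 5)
Your proposal is correct and follows essentially the same route as the paper's proof: setting $\uhvec=\vhvec$, absorbing the consistency terms via Young's inequality with a penalty-proportional parameter, controlling the averages through $|\avg{w}|^2\le\tfrac12(|w^+|^2+|w^-|^2)$ and the discrete trace inequality of Lemma~\ref{thm:stokes_discretisation:discrete_trace}, bookkeeping element-wise over the $\nCellFaces=4$ faces (with the factor $2$ on Navier faces compensating the absent averaging), and finally invoking the discrete Korn inequality of Lemma~\ref{thm:stokes_discretisation:discrete_korn} together with shape regularity to convert the trace constants into the $\kface^2\,h^{-1}$ scalings of $\normh{\cdot}$. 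Your identification of the role of $\stabilisationKorn$ --- opening the window in which positive fractions of both the strain term and the penalty terms survive so that Korn's inequality can be applied --- is exactly the paper's device of adding $\stabilisationKorn\,\etaFace/\nCellFaces$ to each penalty parameter.
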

\begin{proof}
The proof is based on the most part on the stability analysis
for the \textit{element-wise} penalty approach presented in \cite[Section
3.3.3.]{drosson_stability_2013}.
Let $\uhvec=\vhvec$, $\uhvec,\vhvec\in\spaceVhf$; we apply a Young's inequality
to the second and third term of \eqref{eq:stokes_discretisation:stiffness_form}.
Thus, we obtain
\begin{align}
\label{eq:stokes_discretisation:coercivity_proof_1}
\dgformA{\vhvec}{\vhvec} \geq
\sumCells\intCell 2\,\eta\,\strain{\vhvec}: \strain{\vhvec}&\,\ds
-\sumInternalFaces\epsilon_\face^{-1}\,
\int_\face\,\avg{2\,\eta\,\strain{\vhvec}}^2\,\ds
\notag\\
-\sumSkeletonM\epsilon_\face^{-1}\,
\int_\face\,(2\,\eta\,\strain{\vhvec})^2\,\ds
&+\sumInternalFaces(\penalt-\epsilon_\face)\,\normLebesgueTwo{\jmp{\vhvec\otimes\normal}}{\face}^2
\notag \\
&+\sumSkeletonM(\penalt-\epsilon_\face)\,\normLebesgueTwo{\vhvec\cdot\normal}{\face}^2,
\end{align}
where $\epsilon_\face > 0$, $\forall
\face\in\internalFaces\cup\skeletonM$.
As a next step, we will bound the second and third term of
\eqref{eq:stokes_discretisation:coercivity_proof_1} below. We have for interior faces
$\face\in\internalFaces$,
\begin{align}
\label{eq:stokes_discretisation:coercivity_proof_2}
\intFace\,\avg{2\,\eta\,\strain{\vhvec}}^2\,\ds
&\leq
\int_\face\,\frac{1}{2}
\left((2\,\eta\,\strain{\vhvec})|_\cellPlus^2
+(2\,\eta\,\strain{\vhvec})|_\cellMinus^2
\right)\,\ds
\\
&\overset{\text{(I)}}\leq
2\,\etaFace\,
\frac{1}{2}
\constTraceFace\,
\Bigg(
\int_\cellPlus\,
2\,\eta\,\strain{\vhvec} : \strain{\vhvec} \,\dV
\notag
\\
&+
\int_\cellMinus\,
2\,\eta\,\strain{\vhvec} : \strain{\vhvec} \,\dV
\Bigg)
\notag
.
\end{align}
In step (I), we have applied the trace inequality
\eqref{eq:stokes_discretisation:discrete_trace} from lemma
\eqref{thm:stokes_discretisation:discrete_trace}, and further have introduced
constant $\constTraceFace$ defined as in
\eqref{eq:stokes_discretisation:const_trace_face},
as well as constant $\etaFace$
defined as in \eqref{eq:stokes_discretisation:eta_face}.
Analogously, we obtain for boundary
faces
$\face\in\skeletonM$ ,
\begin{align}
\label{eq:stokes_discretisation:coercivity_proof_3b}
\intFace\,(2\,\eta\,\strain{\vhvec})^2\,\dV
&\leq
2\,\etaFace\,
\constTraceFace\,
\int_\cell\,
2\,\eta\,
\strain{\vhvec}:\strain{\vhvec}
\,\ds.
\end{align}
Inserting
\eqref{eq:stokes_discretisation:coercivity_proof_2}
--
\eqref{eq:stokes_discretisation:coercivity_proof_3b}
in \eqref{eq:stokes_discretisation:coercivity_proof_1}, leads to
\begin{align}
\label{eq:stokes_discretisation:coercivity_proof_4}
\dgformA{\vhvec}{\vhvec} &\geq
\sumCells\intCell 2\,\eta\,\strain{\vhvec} : \strain{\vhvec}\\
\times
\Bigg(
1 - \sum_{\face\subset\cellBnd,\face\in\internalFaces}
\epsilon_\face^{-1}\,
\etaFace\,
&
\constTraceFace
\notag
-
\sum_{\face\subset\cellBnd,\face\in\skeletonM}
\epsilon_\face^{-1}\,
2\,\etaFace\,
\constTraceFace\,
\Bigg)\,\dV
\notag\\
&+\sumInternalFaces(\penal\,\constTraceFace-\epsilon_\face)\,\normLebesgueTwo{\jmp{\vhvec\otimes\normal}}{\face}^2
\notag\\
&+\sumSkeletonM(\penal\,\constTraceFace-\epsilon_\face)\,\normLebesgueTwo{\vhvec\cdot\normal}{\face}^2,
\; \vhvec\in\spaceVhf.
\notag
\end{align}
We see that this expression is positive for any $\vhvec \in
\spaceVhf$ if
\begin{align}
\penal\,\constTraceFace
&> \epsilon_\face >
\penal^{*}\,\constTraceFace =
\nCellFaces\,\etaFace\;
\constTraceFace,
\face\in\internalFaces,
\\
\penal\,\constTraceFace
&> \epsilon_\face
>
\penal^{**}\,\constTraceFace =
\nCellFaces\,2\,\etaFace\,
\constTraceFace,
\;\face\in\skeletonM,
\end{align}
where $\nCellFaces$ denotes the number of faces of
a quadrilateral element.
In the following, we choose the penalty parameters according to
\eqref{eq:stokes_discretisation:sipg_penalty}.
We add a small value $\tau\,\etaFace/\nCellFaces\geq\tau\,\etaMin/\nCellFaces$
once/twice to each penalty parameter $\penal$ in order to use the discrete Korn inequality
\eqref{eq:stokes_discretisation:discrete_korn} from lemma
\ref{thm:stokes_discretisation:discrete_korn} in step (I) of the next derivations. Thus, if we choose the penalty value according to
\eqref{eq:stokes_discretisation:sipg_penalty},
we obtain from \eqref{eq:stokes_discretisation:coercivity_proof_4} that
\begin{align}
\dgformA{\vhvec}{\vhvec} \overset{\text{(I)}}\geq&
C\,
\Bigg(
\constKornDisc\,2\,\etaMin
\sumCells\normLebesgueTwo{\nabla\vhvec}{\cell}^2
\\
&+\sumInternalFaces
2\,\etaFace\,
\constTraceFace\,\normLebesgueTwo{\jmp{\vhvec\otimes\normal}}{\face}^2
\notag\\
&+\sumSkeletonM
2\,\etaFace\,
\constTraceFace\,\normLebesgueTwo{\vhvec\cdot\normal}{\face}^2
\Bigg)\notag
\\
\overset{\text{(II)}}\geq&
C\,
2\,\etaMin\,
\norm{\vhvec}_{1,h}^2,\;\vhvec\in\spaceVhf,\notag
\end{align}where $\etaMin$ is from assumption
\ref{itm:stokes:assumption_eta_bounds}, and
$\constKornDisc$ is from the  discrete Korn inequality
\eqref{eq:stokes_discretisation:discrete_korn} from lemma
\ref{thm:stokes_discretisation:discrete_korn}.
In step (II), we have used that due to the affinity and shape regularity of the grid elements
\ref{itm:grid:assumption_grid}, it holds that
$h^{-1}\leq|\face|/|\cell|\leq\gamma^2\,h^{-1}$
for $\cell\in\tria$ and $\face\subset\cellBnd$, with
$\gamma$ denoting the shape regularity constant.
We have further bounded $\penal$ below by $\etaMin\,\nCellFaces$. The constant
$C$ is assigned a different value in every step and independent of $h$ and $k$.
\end{proof}

We typically choose the penalty parameters as the lower bound 
since estimate \eqref{eq:stokes_discretisation:sipg_penalty} is not totally
sharp due to the utilised inequalities.
Even smaller values can be chosen in practice 
\cite{hillewaert_development_2013}.
We note that the parameter $\tau$ is set to zero in our computations.
We further remark that estimate \eqref{eq:stokes_discretisation:sipg_penalty}
assumes piecewise constant viscosity distributions. For piecewise polynomial viscosity
distributions, it is necessary to replace $k$ by ${k\gets k+m_\eta}$
in the discrete trace inequality constant.
\subsection{A-priori error estimates for the Stokes discretisation}
\label{sec:stokes:discrete_errors}
Let us state the continuity properties of
the forms $\dgformAs$ and $\dgformBs$:
\begin{lemma}[Continuity of $\dgformAs$ and $\dgformBs$\label{thm:stokes_discretisation_error:continuity}]
Under assumptions
\ref{itm:governing_equations:assumption_rigid_motions} --
\ref{itm:stokes:assumption_force},
\ref{itm:grid:assumption_grid},
and
\ref{itm:stokes_discretisation:assumption_eta},
bilinear forms
$\dgformAs$ and $\dgformBs$ are continuous in the sense that
\begin{align}
\label{eq:stokes_discretisation_error:continuity_ah}
\dgformA{\uvec}{\vvec}
&\leq
\Cah\,\normhh{\uvec}\,\normhh{\vvec},
&&\forall(\uvec,\vvec)\in\sobolev{2}{\tria}^2 \times
\sobolev{2}{\tria}^2\\
\label{eq:stokes_discretisation_error:continuity_bh}
\dgformB{\uvec}{\qscalar}
&\leq
\Cbh\,\normh{\uvec}\,\normZero{\qscalar},
&&\forall(\uvec,\qscalar)\in\sobolev{2}{\tria}^2\times\sobolev{1}{\tria}
\end{align}
with $\Cah = \left(2\,\etaMax+\max_{\face\in\internalFaces\cup\skeletonM}\{\penal\}\right) > 0$
and $\Cbh = \sqrt{d} > 0$.
Here, $\etaMax$ is from assumption \ref{itm:stokes:assumption_eta_bounds},
and $\penal$ denotes the penalty value on face $\face$.
Both constants $\Cah$ and $\Cbh$ are independent of $h$ and $k$.
\end{lemma}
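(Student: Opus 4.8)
The plan is to estimate each bilinear form termwise: on every cell or face I apply a Cauchy--Schwarz inequality, bound the viscosity by $\etaMax$ and the symmetric gradient by the full gradient, distribute the powers of $h$ and $\kface$ so that one factor carries the weight of a constituent of $\normhh{\cdot}$ or $\normh{\cdot}$ and the other the weight of a constituent of $\normhh{\cdot}$, $\normh{\cdot}$, or $\normZero{\cdot}$, and finally apply a discrete Cauchy--Schwarz inequality over the sums.

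For $\dgformAs$ I would split the form into its volume term, the four consistency (symmetry) terms, and the two penalty terms. The volume term is controlled using $\eta\le\etaMax$ and $\normLebesgueTwo{\strain{\vvec}}{\cell}\le\normLebesgueTwo{\nabla\vvec}{\cell}$ (the symmetric part of a matrix has no larger Frobenius norm than the matrix), yielding $2\etaMax\normh{\uvec}\normh{\vvec}$ after a cellwise and then a discrete Cauchy--Schwarz. For the consistency terms I apply Cauchy--Schwarz on each interior or Navier face and bound the averaged viscous flux $\avg{2\eta\strain{\uvec}}$ by $2\etaMax$ times an averaged-gradient trace (using $\eta\le\etaMax$ and the symmetric-gradient estimate), then split the facewise weight as $1=(\kface^{-1}h^{1/2})(\kface h^{-1/2})$, attaching $\kface^{-2}h$ to the gradient-trace factor and $\kface^{2}h^{-1}$ to the jump factor. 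A discrete Cauchy--Schwarz over faces matches these with the $\avg{\nabla\,\vvec}$-trace term of $\normhh{\cdot}$ and the jump term of $\normh{\cdot}$, giving $2\etaMax\normhh{\uvec}\normhh{\vvec}$. For the penalty terms I factor out $\max_{\face}\{\penal\}$, recall $\penalt=\penal\,\constTraceFace$ with $\constTraceFace$ comparable to $\kface^{2}h^{-1}$ by shape regularity, and identify the remaining weight with the jump term of $\normh{\cdot}$, obtaining $\max_{\face}\{\penal\}\normh{\uvec}\normh{\vvec}$. Adding the three contributions and using $\normh{\cdot}\le\normhh{\cdot}$ then gives the additive constant $\Cah=2\etaMax+\max_{\face}\{\penal\}$.

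For $\dgformBs$ I proceed in the same spirit. The volume term is where the factor $\sqrt d$ originates: from the pointwise bound $|\text{div}\,\uvec|\le\sqrt d\,|\nabla\uvec|$ one gets $\normLebesgueTwo{\text{div}\,\uvec}{\cell}\le\sqrt d\,\normLebesgueTwo{\nabla\uvec}{\cell}$, so the volume term is bounded by $\sqrt d\,\normh{\uvec}\normZero{\qscalar}$. For the interior and Navier face terms I use $|\jmp{\uvec\cdot\normal}|\le|\jmp{\uvec\otimes\normal}|$, apply Cauchy--Schwarz facewise, and perform the same weight split so that the pressure-average factor carries $\kface^{-2}h$ (matching $\normZero{\cdot}$) and the velocity-jump factor carries $\kface^{2}h^{-1}$ (matching $\normh{\cdot}$). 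Writing the three contributions as products $A_iB_i$ with $A_1^2+A_2^2+A_3^2\le\normh{\uvec}^2$ and $B_1^2+B_2^2+B_3^2=\normZero{\qscalar}^2$, a single three-term Cauchy--Schwarz together with $\sqrt d\ge1$ delivers $\Cbh=\sqrt d$.

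None of the individual steps is difficult; the part requiring the most care is the systematic bookkeeping of the $h$- and $\kface$-weights so that each factor lands exactly on its intended constituent of the three norms. In particular, identifying the SIP penalty weight $\penalt=\penal\,\constTraceFace$ with the $\kface^{2}h^{-1}$ weight of the jump seminorm requires invoking the shape-regularity bounds $h^{-1}\le|\face|/|\cell|\le\gamma^{2}h^{-1}$ to absorb $\constTraceFace$ into a constant multiple of the norm weight, and one must be mindful of the jump of $\eta$ inside $\avg{2\eta\strain{\uvec}}$, which is handled cleanly only because $\eta\le\etaMax$. I expect no genuine obstacle, only this careful scaling argument, and I would finally confirm that adding the non-penalty and penalty contributions, rather than combining them multiplicatively, is what produces the stated additive form of $\Cah$.
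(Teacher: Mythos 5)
Your proposal is correct and takes essentially the same route as the paper: the paper gives no details at all, stating only that both bounds ``follow from standard inequalities'' as in the proofs of Lemmas 7.1 and 7.2 of Toselli (2002), and those proofs are precisely the termwise Cauchy--Schwarz, $h$/$\kface$ weight-splitting, and discrete Cauchy--Schwarz bookkeeping that you spell out. The only caveat --- which you yourself flag --- is that identifying $\penalt=\penal\,\constTraceFace$ with the $\kface^{2}h^{-1}$ jump weight holds only up to a shape-regularity and degree-indexing factor, so strictly speaking your argument yields $\Cah$ up to such a constant; the paper's statement glosses over this point in exactly the same way.
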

\begin{proof}
The proof of both inequalities follows from standard inequalities.
See e.g. the proofs of \cite[Lemma 7.1]{toselli_hp_2002} and
\cite[Lemma 7.2]{toselli_hp_2002} for more details.
\end{proof}

Further note that the norms
$\normh{\cdot}$ and $\normhh{\cdot}$ are equivalent
on the discrete spaces $\spaceVhf$ and $\spaceQhf$. This is also the case for
the norms $\normZero{\cdot}$ and $\normLebesgueTwo{{\cdot}}{\domain}$.

Using the consistency of the discretisation, the discrete inf-sup stability of
$\dgformBs$, the discrete coercivity of $\dgformAs$, the continuity of both
bilinear forms together with the discrete equivalency of norms, as well as
suitable element-wise $hp$-interpolants, we can derive the following a-priori
error estimate:
\begin{lemma} \label{thm:stokes_discretisation_error:error_norm_h}
Let the assumptions.
\ref{itm:governing_equations:assumption_rigid_motions},
\ref{itm:stokes:assumption_eta_bounds}
--
\ref{itm:stokes:assumption_force},
\ref{itm:grid:assumption_grid},
and
\ref{itm:stokes_discretisation:assumption_eta} hold.
Assume that the weak solution
$(\uvec,\pscalar)$
to
\eqref{eq:governing_equations:momentum_balance} --
\eqref{eq:governing_equations:pressure}
belongs to $\sobolev{m}{\cell}^2\times\sobolev{n}{\cell}$,
$\cell\in\tria$,
with $m\geq 2$ and $n\geq1 $.
Further, let $\uhvec\in\spaceVhf$ and $\phscalar\in\spaceQhf$
denote the discrete solution
to problem \eqref{eq:stokes_discretisation:discrete_momentum_balance} --
\eqref{eq:stokes_discretisation:discrete_incompressible_constraint}.
Then, it holds hat
\begin{align}
\normh{\uvec-\uhvec}
+
\normZero{\pscalar-\phscalar}
&\leq
\notag \\
&C\,
\sumCells
\left(
\frac{\Cah}{\dgalpha}\,
\frac{1}{\dgbeta}
\frac{h^{s-1}}{\kcell^{m-3/2}}\normSobolev{\uvec}{m}{\cell}
+
\frac{h^{r}}{\kcell^{n}}\normSobolev{\pscalar}{n}{\cell}
\right),
\end{align}
with $1\leq s \leq \min\{\kcell+1,m\}$ and
$1\leq r \leq \min\{\kcell,n\}$. The constant $C>0$ is independent of
$h$ and $k$ but depends on the shape regularity of the grid elements.
The constant $\Cah$ is from lemma \ref{thm:stokes_discretisation_error:continuity}, and
$\dgalpha$ is from lemma \ref{thm:stokes_discretisation:discrete_coercivity}.
Note that this estimate holds up to the discrete inf-sup constant $\dgbeta$ from
lemma \ref{thm:stokes_discretisation:discrete_inf_sup} that depends on $k$.
\end{lemma}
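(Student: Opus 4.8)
The plan is to follow the classical error analysis for mixed (saddle point) discretisations in the spirit of Brezzi's theory, combined with elementwise $hp$-approximation estimates. First I would exploit consistency: since $(\uvec,\pscalar)$ solves the continuous problem and, as noted above, the discretisation is consistent, the exact solution satisfies the discrete equations as well. Subtracting \eqref{eq:stokes_discretisation:discrete_momentum_balance}--\eqref{eq:stokes_discretisation:discrete_incompressible_constraint} then yields the Galerkin orthogonality relations
\begin{align}
\dgformA{\uvec-\uhvec}{\vhvec} + \dgformB{\vhvec}{\pscalar-\phscalar} &= 0 \quad \forall\vhvec\in\spaceVhf,\notag\\
\dgformB{\uvec-\uhvec}{\qhscalar} &= 0 \quad \forall\qhscalar\in\spaceQhf.\notag
\end{align}
I would then fix elementwise $hp$-interpolants $I_h\uvec\in\spaceVhf$ and $I_h\pscalar\in\spaceQhf$ and split the errors into an approximation part and a discrete part,
\[
\uvec-\uhvec = (\uvec-I_h\uvec) + (I_h\uvec-\uhvec) =: \xi_u + e_u,\qquad \pscalar-\phscalar = (\pscalar-I_h\pscalar)+(I_h\pscalar-\phscalar) =: \xi_p + e_p,
\]
so that, by the triangle inequality, it suffices to bound the discrete parts $e_u\in\spaceVhf$, $e_p\in\spaceQhf$ in terms of $\xi_u$, $\xi_p$. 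Inserting the splitting into the orthogonality relations produces the perturbed saddle point system carrying $-\dgformA{\xi_u}{\vhvec}-\dgformB{\vhvec}{\xi_p}$ and $-\dgformB{\xi_u}{\qhscalar}$ on its right-hand sides.

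The core of the argument is the abstract stability estimate. I would first bound the discrete pressure error via the discrete inf-sup stability of lemma~\ref{thm:stokes_discretisation:discrete_inf_sup}: solving the first orthogonality relation for $\dgformB{\vhvec}{e_p}$ and using the continuity of $\dgformAs$ and $\dgformBs$ from lemma~\ref{thm:stokes_discretisation_error:continuity} gives
\[
\dgbeta\,\normZero{e_p} \le \sup_{\vec 0\neq\vhvec\in\spaceVhf}\frac{\dgformB{\vhvec}{e_p}}{\normh{\vhvec}} \le C\big(\Cah(\normh{e_u}+\normhh{\xi_u}) + \Cbh\normZero{\xi_p}\big),
\]
where I have also used the equivalence of $\normh{\cdot}$ and $\normhh{\cdot}$ on the discrete spaces (and the equivalence of $\normZero{\cdot}$ with the $\lebesgueTwo{\domain}$-norm). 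Next I would bound the discrete velocity error by the coercivity of lemma~\ref{thm:stokes_discretisation:discrete_coercivity}: testing the first relation with $\vhvec=e_u$ and the second with $\qhscalar=e_p$ to eliminate $\dgformB{e_u}{e_p}$ yields
\[
\dgalpha\,\normh{e_u}^2 \le |\dgformA{\xi_u}{e_u}| + |\dgformB{\xi_u}{e_p}| + |\dgformB{e_u}{\xi_p}|.
\]
The middle term couples the two errors; I would estimate it through the pressure bound just derived and then use Young's inequality to absorb the arising multiple of $\normh{e_u}^2$ into the left-hand side. Solving for $\normh{e_u}$ and feeding the result back into the pressure estimate gives a bound of the form
\[
\normh{e_u} + \normZero{e_p} \le C\,\frac{\Cah}{\dgalpha}\frac{1}{\dgbeta}\,\normhh{\xi_u} + C\,\normZero{\xi_p},
\]
in which the factor $1/\dgbeta$ enters the velocity contribution precisely because the coupling term is handled through the inf-sup rather than through a divergence-preserving (Fortin) interpolant.

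The final step is purely approximation-theoretic: I would invoke elementwise $hp$-interpolation estimates for $Q_k$ on affine quadrilaterals, which give, cellwise,
\[
\normhh{\uvec-I_h\uvec}|_\cell \le C\,\frac{h^{s-1}}{\kcell^{m-3/2}}\,\normSobolev{\uvec}{m}{\cell},\qquad \normZero{\pscalar-I_h\pscalar}|_\cell \le C\,\frac{h^{r}}{\kcell^{n}}\,\normSobolev{\pscalar}{n}{\cell},
\]
for $1\le s\le\min\{\kcell+1,m\}$ and $1\le r\le\min\{\kcell,n\}$, with $C$ depending only on the shape regularity of $\tria$. Substituting these into the stability bound, summing over the cells, and adding back the approximation parts $\normh{\xi_u}+\normZero{\xi_p}$ through the triangle inequality yields the claimed estimate.

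The step I expect to be the main obstacle is the treatment of the coupling term $\dgformB{\xi_u}{e_p}$ together with the correct bookkeeping of the $h$- and $k$-powers in the nonstandard norm $\normhh{\cdot}$. The trace contributions in $\normhh{\cdot}$, which scale like $h\,\kcell^{-2}$ on faces, are responsible for the half-power loss in $k$ in the velocity exponent $\kcell^{m-3/2}$ and require sharp $hp$-trace inequalities within the interpolation estimates; and because we do not employ a Fortin interpolant, the inf-sup constant $\dgbeta=c\,k^{-1}$ unavoidably enters the velocity bound, which is the origin of the mild, suboptimal $k$-dependence flagged in the statement.
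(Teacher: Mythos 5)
Your proposal is correct and follows essentially the same route as the paper, whose proof simply defers to the arguments of Toselli's Lemmas 8.1 and 8.2: consistency/Galerkin orthogonality, splitting into $hp$-interpolation and discrete errors, and the Brezzi-type stability argument combining the discrete coercivity of $\dgformAs$, the continuity of both forms, and the discrete inf-sup condition for $\dgformBs$, followed by standard $hp$-approximation estimates. The constant bookkeeping (the factors $\Cah/\dgalpha$, $1/\dgbeta$, and the $k^{-3/2}$ loss from the trace terms in $\normhh{\cdot}$) matches what the cited lemmas deliver, so no gap remains.
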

\begin{proof}
The proof requires arguments analogous to the ones used for the proofs of
\cite[Lemma 8.1]{toselli_hp_2002} and \cite[Lemma 8.2]{toselli_hp_2002}.
\end{proof}

Utilising a duality argument, additional requirements on the regularity
of the solution to the continuous variational problem (and on the solution to
the adjoint problem), as well as $hp$-interpolants for the whole domain
$\domain$, we can derive the following $L^2$ error estimate for the discrete velocity
solution:
\begin{lemma}[$L^2$ error]
\label{thm:stokes_discretisation_error:error_norm_l2_uniform}
Let the assumptions
\ref{itm:governing_equations:assumption_rigid_motions} --
\ref{itm:stokes:assumption_force},
\ref{itm:grid:assumption_grid},
and
\ref{itm:stokes_discretisation:assumption_eta} hold.
Assume further that the weak solution
$(\uvec,\pscalar)$
to
\eqref{eq:governing_equations:momentum_balance} --
\eqref{eq:governing_equations:pressure}
belongs
to $\sobolev{n+1}{\domain}^2\times\sobolev{n}{\domain}$,
with $m\geq1 $. Choose the penalty values according to
\eqref{eq:stokes_discretisation:sipg_penalty} in lemma
\ref{thm:stokes_discretisation:discrete_coercivity}.
Let $\uhvec\in\spaceVhf$ and $\phscalar\in\spaceQhf$
denote the discrete solution
to problem \eqref{eq:stokes_discretisation:discrete_momentum_balance} --
\eqref{eq:stokes_discretisation:discrete_incompressible_constraint}.
Then, it holds that
\begin{align}
\normLebesgueTwo{\uvec-\uhvec}{\domain} &\leq
C\,h^{\min({k+1},n+1)}\,
\left(
\frac{\Cah^2}{\dgalpha}
\frac{1}{\dgbeta^2}
\normSobolev{\uvec}{n+1}{\domain} +
\Cah
\frac{1}{\dgbeta}
\normSobolev{\pscalar}{n}{\domain}
\right),
\end{align}
where the constant $C>0$ is independent of
$h$ and $k$ but depends on the shape regularity of the grid.
See Lemma \ref{thm:stokes_discretisation_error:error_norm_h} for
a definition of the remaining constants.
\end{lemma}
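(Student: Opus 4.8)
The plan is to run an Aubin--Nitsche duality argument, gaining one power of $h$ over the energy estimate of Lemma \ref{thm:stokes_discretisation_error:error_norm_h}. Since we wish to control the velocity error in $\lebesgueTwo{\domain}$, I would introduce the adjoint Stokes problem with the velocity error as data: find $(\phi,\xi)$ solving $-\textbf{div}(2\,\eta\,\strain{\phi})+\nabla\xi=\uvec-\uhvec$, $\text{div}(\phi)=0$ in $\domain$, subject to the boundary conditions adjoint to those of the original problem. Because $\domain$ is a rectangle (hence convex) and the viscosity is element-wise constant by \ref{itm:stokes_discretisation:assumption_eta}, I would invoke elliptic regularity for this dual problem to obtain $(\phi,\xi)\in\sobolev{2}{\domain}^2\times\sobolev{1}{\domain}$ together with the a-priori bound $\normSobolev{\phi}{2}{\domain}+\normSobolev{\xi}{1}{\domain}\leq C\,\normLebesgueTwo{\uvec-\uhvec}{\domain}$. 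This regularity is precisely the extra hypothesis flagged in the statement, and securing it for the mixed Neumann/Navier boundary conditions is the first substantive point that must be justified.

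Because $\dgformAs$ is symmetric and consistent, the adjoint problem is governed by the same discrete forms, and testing it with the velocity error gives the representation $\normLebesgueTwo{\uvec-\uhvec}{\domain}^2=\dgformA{\uvec-\uhvec}{\phi}+\dgformB{\uvec-\uhvec}{\xi}$. I would then subtract suitable element-wise $hp$-interpolants $\phi_h\in\spaceVhf$ and $\xi_h\in\spaceQhf$ of the dual solution and exploit the Galerkin orthogonality noted earlier in the excerpt. The incompressibility orthogonality kills $\dgformB{\uvec-\uhvec}{\xi_h}$, the momentum orthogonality converts $\dgformA{\uvec-\uhvec}{\phi_h}$ into $-\dgformB{\phi_h}{\pscalar-\phscalar}$, and the second dual equation $\dgformB{\phi}{\cdot}=0$ allows me to replace $\phi_h$ there by $\phi_h-\phi$. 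This yields
\[
\normLebesgueTwo{\uvec-\uhvec}{\domain}^2
= \dgformA{\uvec-\uhvec}{\phi-\phi_h}
+ \dgformB{\uvec-\uhvec}{\xi-\xi_h}
+ \dgformB{\phi-\phi_h}{\pscalar-\phscalar},
\]
so that the entire right-hand side is expressed through interpolation errors of the dual solution paired with the primal discretisation errors.

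The remaining work is to estimate these three products. I would apply the continuity bounds of Lemma \ref{thm:stokes_discretisation_error:continuity} to each term, bound the dual interpolation errors $\normhh{\phi-\phi_h}$ and $\normZero{\xi-\xi_h}$ by $C\,h\,(\normSobolev{\phi}{2}{\domain}+\normSobolev{\xi}{1}{\domain})\leq C\,h\,\normLebesgueTwo{\uvec-\uhvec}{\domain}$ using standard $hp$-interpolation estimates together with the elliptic regularity above, and bound the primal errors $\normh{\uvec-\uhvec}$ and $\normZero{\pscalar-\phscalar}$ through Lemma \ref{thm:stokes_discretisation_error:error_norm_h}. Dividing by $\normLebesgueTwo{\uvec-\uhvec}{\domain}$ then produces exactly one extra power of $h$ relative to the energy estimate, i.e.\ the claimed $h^{\min(k+1,n+1)}$.

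I expect the main obstacle to be the careful bookkeeping of the constants and of the non-conforming structure, rather than the overall architecture. The face terms in $\dgformAs$ and $\dgformBs$ must be handled so that the traces of the dual solution $\phi\in\sobolev{2}{\domain}^2$ are well defined; the passage between $\normh{\cdot}$ and $\normhh{\cdot}$ requires the norm equivalences noted after Lemma \ref{thm:stokes_discretisation_error:continuity}, applied after splitting the primal error into a discrete part and an interpolation part; and the dependence on $\Cah$, $\dgalpha$, and the $k$-dependent inf-sup constant $\dgbeta$ of Lemma \ref{thm:stokes_discretisation:discrete_inf_sup} must be tracked through both the duality step and the inserted energy estimate to recover the stated (and deliberately non-sharp, since $\dgbeta^{-1}\geq 1$) factors $\Cah^2/(\dgalpha\,\dgbeta^2)$ and $\Cah/\dgbeta$. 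This follows the lines of the corresponding duality argument in \cite{toselli_hp_2002}.
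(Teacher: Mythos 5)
Your proposal follows the same route the paper indicates for this lemma: the paper gives no detailed proof, only the prescription preceding the statement --- a duality argument, additional regularity requirements on the solution of the continuous problem \emph{and of the adjoint problem}, and $hp$-interpolants on the whole domain $\domain$, in the spirit of the corresponding estimates in \cite{toselli_hp_2002}. Your execution of that plan is sound: the error representation obtained from adjoint consistency (symmetry) of $\dgformAs$ and the two Galerkin orthogonalities, the use of the dual constraint $\dgformB{\phi}{\cdot}=0$ to replace $\phi_h$ by $\phi_h-\phi$, the continuity bounds of lemma \ref{thm:stokes_discretisation_error:continuity}, the splitting of the primal error into a discrete part and an interpolation part to pass between $\normh{\cdot}$ and $\normhh{\cdot}$, and the insertion of lemma \ref{thm:stokes_discretisation_error:error_norm_h} to gain exactly one power of $h$ are the standard mechanics and do yield the stated rate $h^{\min(k+1,n+1)}$ with the stated constants.

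The one genuine gap is your justification of the dual regularity. You claim that because $\domain$ is convex and the viscosity is element-wise constant by \ref{itm:stokes_discretisation:assumption_eta}, elliptic regularity yields $(\phi,\xi)\in\sobolev{2}{\domain}^2\times\sobolev{1}{\domain}$ together with $\normSobolev{\phi}{2}{\domain}+\normSobolev{\xi}{1}{\domain}\leq C\,\normLebesgueTwo{\uvec-\uhvec}{\domain}$. For heterogeneous viscosity this is false: piecewise-constant coefficients with jumps produce Kellogg-type interface and cross-point singularities, so the dual solution is in general only in $\sobolev{1+s}{\domain}^2$ for some $s<1$ depending on the contrast, even on a convex domain and irrespective of the Neumann/Navier boundary conditions; moreover, any constant in such an a-priori bound degenerates as the jump grows. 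Element-wise constancy of $\eta$ is what \emph{breaks} the regularity here, not what provides it --- convexity rescues you only in the isoviscous (or smooth-coefficient) case. This is precisely why the paper does not attempt to derive the adjoint regularity but lists it as an additional requirement (it is a separate hypothesis on the dual problem, not the ``extra hypothesis flagged in the statement'', which concerns the primal solution), and it is also why the paper remarks in the SolCx section that this $L^2$ estimate is not applicable for the discontinuous-pressure benchmark. Your argument becomes correct if you replace the claimed derivation by an explicit assumption of $\sobolev{2}{\domain}^2\times\sobolev{1}{\domain}$ regularity of the adjoint problem, keeping track of how the constant of that assumption (and hence the constant $C$ in the lemma) depends on the viscosity structure.
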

\begin{remark}
\rm We emphasise that the discretisation errors depend
on the size of the largest penalty parameter as well as on the viscosity
contrast by means of the constants $C_{a,h}$ and $\alpha_h$.
\end{remark}
\section{Preconditioning the Stokes system}
\label{sec:stokes_preconditioner}
The discrete variational problem
\eqref{eq:stokes_discretisation:discrete_momentum_balance}
-- \eqref{eq:stokes_discretisation:discrete_incompressible_constraint}
is equivalent to the saddle point problem
\begin{align}
\label{eq:discretestokes}
\begin{bmatrix}
\laMat{A}\phantom{\transp} & \laMat{B} \\
\laMat{B}\transp & \laMat{0}
\end{bmatrix}
\begin{bmatrix}
\laVec{u} \\
\laVec{p}
\end{bmatrix}
&=
\begin{bmatrix}
\laVec{f} \\
\laVec{g}
\end{bmatrix}.
\end{align}
We solve \eqref{eq:discretestokes} using a right preconditioned Krylov method,
with an upper block triangular preconditioner $\laMat {\mathcal P}$ of the form:
\begin{align}
\label{eq:discretestokespc}
\laMat{\mathcal P} = \begin{bmatrix}
\laMat A	&\laMat B \\
\laMat 0	&\laMat S
\end{bmatrix},
\end{align}
where $\laMat S = \laMat B^T \laMat A^{-1} \laMat B$ is the pressure Schur complement.

Our implementation employs tensor products of pairwise orthogonal
Legendre polynomials as basis functions for 
the pressure and the velocity components. 
The zero pressure average is not build into the pressure basis functions.
In case the Neumann boundary is empty, we thus solve a singular
system. As we typically use Krylov methods which are mathematically equivalent
to GMRES, we then require that the right-hand side is consistent
(e.g., we remove the constant pressure null space)
\cite[Theorem 2.4.]{brown_gmres_1997}.

Noting that both $\laMat{A}$ is symmetric positive definite (stemming from the SIP formulation) and $\laMat{S}$ is
symmetric positive definite (stemming from the inf-sup stability), this choice for $\laMat{\mathcal P}$ will result in convergence
in at most two iterations in exact arithmetic \cite{elma.iffis:2005}.
Whilst optimal (in the sense of iteration counts), the definition of
$\laMat{\mathcal P}$ is not practical as it involves an exact inverse for
$\laMat{A}$ and $\laMat{S}$.
A practical Stokes preconditioner replaces $\laMat{A}^{-1}$ and $\laMat{S}^{-1}$
by spectrally equivalent operators such that their application of the inverse on
a vector is significantly cheaper.

In our computations, we replace the Schur complement $\laMat{S}$ by the pressure
mass matrix scaled by the inverse of the element viscosity ($\laMat{S}^*$).
The proof of spectral equivalence between $\laMat{S}$ and $\laMat{S}^*$
for our DG spaces stems immediately from \cite[Theorem 5.22]{elma.iffis:2005}.
Due to the use of an orthonormal basis for the pressure space, $\laMat{S}^*$ is
diagonal.
The definition of $\laVec{y} = \laMat{A}^{-1} \laVec{x}$ is further replaced
with a preconditioned Krylov method with a fixed relative stopping condition which is described in detail in section
\ref{sec:stokes_preconditioner:viscous_block}.
\section{Preconditioning the viscous block}
\label{sec:stokes_preconditioner:viscous_block}
When used in conjunction with a Krylov method, we are required to apply the
action of $\laMat{\mathcal P}^{-1}$ on an arbitrary vector $\laVec{x} =
(\laVec{x}_u, \laVec{x}_p)$. We consider replacing the definition $\laVec{y}_u =
\laMat{A}^{-1} \laVec{x}_u$ with a spectrally equivalent operation:
solve $\laMat{A} \laVec{y}_u = \laVec{x}_u$ for $\laVec{y}_u$ using a
preconditioned Krylov method such that at the $i$-th iteration, $\| \laMat{A} \laVec{y}^i_u - \laVec{x}_u \| /
\| \laVec{x}_u \| < \epsilon$. In order to develop an optimal and scalable preconditioner for $\laMat{A}$, in this work we utilise a \textit{hp}-multilevel preconditioner.
The \textit{hp}-multilevel preconditioner employs coarsening with respect to
both the polynomial order $k$ of the velocity function space
(``$p$-coarsening'') and the spatial resolution $h$.
The rational for coarsening in both $p$ and $h$ will be elaborated below.

To introduce the $hp$-multilevel preconditioner, we first recall the basic twolevel multigrid method (see Algorithm
\ref{alg:mg}).
\begin{algorithm}
\caption{Twolevel Multigrid}\label{alg:mg}
\begin{algorithmic}[1]
\Procedure{MGVCycle}{$\laMat{A}$, $\laVec {f}$, $\laMat{\mathcal M}$,
$\laMat{R}$, $\laMat{P}$, $\laMat{\bar{A}}$, $\laVec{y}$}
\State{Set $i=0, \laVec{u}^0 = \laVec{y}$}
\Repeat
\State $\laVec{u}^i = \laVec{u}^i + \laMat{\mathcal M}^{-1}\,(\laVec{f} - \laMat A \, \laVec{u}^i
)$
\Comment{pre-smooth $m$ times}
\State $\laVec{\bar{r}} = \laMat{R}\,(\laVec{f} - \laMat
A \, \laVec{u}^i)$
\Comment{restrict residual}
\State Solve $\laMat{\bar{A}}\,\laVec{\bar{e}} = \laVec{\bar{r}}$
\Comment{solve
for the coarse grid correction} \State $\laVec{u}^i = \laVec{u}^i + \laMat{P}\,
\laVec{\bar{e}}$
\Comment{prolongate error} \State $\laVec{u}^i = \laVec{u}^i +
\laMat{\mathcal M}^{-\textup{T}} (\laVec{f} - \laMat{A}\,\laVec{u}^i )$
\Comment{post-smooth $m$ times}
\State $\laVec{u}^{i+1} = \laVec{u}^i$
\Comment{update for next iteration}
\State $i = i+1$
\Until{converged}
\State $\laVec{u} = \laVec{u}^i$
\EndProcedure
\end{algorithmic}
\end{algorithm}
The essential components of the multigrid algorithm are the fine level operator
$\laMat{A}$, the coarse level operator $\laMat{\bar{A}}$, the restriction and
prolongation operators $\laMat{R}$, $\laMat{P}$, which map vectors from the
fine level to the coarse level (and vice-versa), and the smoothing operator
$\laMat{\mathcal M}$.

In the context of $p$-multigrid, restriction refers to mapping a discrete vector defined using a
function space of order $k$ to a function space of order $s$, where $0 \le s <
k$,
Contrary to traditional $p$-coarsening strategies with hierarchies like $\spaceVhi{k} \rightarrow \spaceVhi{k/2} \rightarrow \spaceVhi{k/4} \dots$, here
we follow \cite{van_slingerland_fast_2014, van_slingerland_scalable_2015} and consider ``aggressive'' coarsening from $\spaceVhi{k}$ to an a-priori defined $p$-coarse space in a single step. 
This implicitly defines a twolevel hierarchy in $p$-space.
In this work, we study two different $p$-coarse spaces, namely
the space of element-wise constants $\spaceVhi{0}$ and
the space of $d$-linear functions $\spaceVhi{1}$.
We will use the symbols $\laMat{A}_0$ and $\laMat{A}_1$ for
the associated coarse grid operators.


The construction of the restriction and prolongation operators between different
order basis functions is natural to implement as we have employed a hierarchical basis. Furthermore, the prolongation operators are identical to the transposed restriction
operators. 
We define coarse operators via Galerkin projection.
Denoting the prolongation from polynomial degree $k$ to $s$ via $P_k^s$, the coarse level operators we consider are thus given by
$$
\laMat{A}_0 = \left( \laMat{P}_k^0 \right)\transp \laMat{A}\,\laMat{P}_k^0, 
\quad
\laMat{A}_1 = \left( \laMat{P}_k^1 \right)\transp \laMat{A}\,\laMat{P}_k^1.
$$

The smoother is defined as a Chebyshev iteration preconditioned
with a element-block Jacobi operator that consists of the diagonal blocks of $\laMat{A}$.
The minimum ($\lambda_0$) and maximum ($\lambda_1$) eigenvalue bounds required by Chebyshev are defined in the following manner. First, we estimate the maximum eigenvalue ($\lambda^*$) of $\laMat{A}$ by performing 10 iterations of GMRES with a random right hand side vector. We then choose $\lambda_0 = 0.1 \lambda^*$ and $\lambda_1 = 1.1 \lambda^*$ respectively. The choice of factors 0.1 and 1.1 have been determined empirically, however they are robust for variable coefficient scalar / vector elliptic problems and 
in fact are the default values used PETSc's Chebyshev implementation.

The bilinear form associated with the element-block Jacobi operator is:
\begin{align}
\label{eq:stokes_preconditioner:smoother}
\dgformAprec{\uhvec}{\vhvec} &=
\sumCells \Bigg (
\intCell 2\,\eta\,\strain{\uhvec} : \strain{\vhvec}\,\dV
\\
-
\sum_{\substack{\face\subset\cellBnd \\ \face\in\internalFaces}}
\intFace \eta^{\cell}\,\strain{\uhvec^{\cell}} : (\vhvec^{\cell} \otimes \normal^{\cell})\,\ds
&-
\sum_{\substack{\face\subset\cellBnd \\ \face\in\internalFaces}}
\intFace \eta^{\cell} \,\strain{\vhvec^{\cell}} :
(\uhvec^{\cell} \otimes \normal^{\cell})
\,\ds
\notag\\
+
\sum_{\substack{\face\subset\cellBnd \\ \face\in\internalFaces}}
\penalt\intFace
(\uhvec^{\cell}\otimes\normal^{\cell}) :
(\vhvec^{\cell}\otimes\normal^{\cell})\,\ds \notag
&+ \textup{ boundary terms} \Bigg), \notag
\end{align}
with $\uhvec,\vhvec\in\spaceVhf$, $\vhvec^{\cell}=\vhvec|_\cell$, and
$\normal^{\cell}$ denoting the outward normal to the boundary of element
$\cell\in\tria$. Notice the division by two in the second and third term stemming from the averaging on interior faces.

From similar arguments as in the proof of lemma
\ref{thm:stokes_discretisation:discrete_coercivity} follows
that the form \eqref{eq:stokes_preconditioner:smoother} is elliptic on
$\spaceVhf$.
Consequently, the element-block Jacobi operator
$\laMat{\mathcal{M}}_k$ is symmetric and positive-definite.
Following the proofs of \cite[Corollary 1 and
Equation (54)]{van_slingerland_scalable_2015}, one can then show that
\begin{align}
\kappa_2 \left( \laMat{\mathcal M}_k^{-1}\,\laMat{A}\right) \leq
1 +
C_1\,\max_{\cell\in\tria}\left\{\frac{\max_{\face\subset\cellBnd}\penal}{\min_{\textbf{x}\in
K}\eta} \right\}
+ C_2\,\frac{1}{\etaMin}
\end{align}
with constants $C_1>0$ and $C_2>0$ independent of $h$.
This result emphasises the importance of choosing the penalty parameters
based on local values of the viscosity.

\subsection{Preconditioning the coarse problem}
\label{sec:stokes_preconditioner:viscous_block:coarse}
In the context of high-resolution simulations, a pure $p$-multilevel preconditioner will never yield optimal $O(n)$ solve times
due to the increasing cost of performing the solve on the coarsest level.
This motivates us to employ a $h$-multigrid preconditioner for the coarse operators $\laMat{A}_0$ and 
$\laMat{A}_1$, respectively.
To realise this, we have adopted standard multigrid techniques which have been developed for 
finite difference discretisations and low-order finite discretisations. 
Below we elaborate on how these techniques are fused with our SIP-DG spatial discretisation.

\subsubsection{Element-wise constant coarse problem}
\label{sec:stokes_preconditioner:viscous_block:q0}
Let us denote  by $\mathcal{A}_{h.0}$ the
bilinear form inherited from $\mathcal{A}_{h}$ that corresponds to the element-wise
constants. Discretising $\mathcal{A}_{h.0}$ for an isoviscous fluid yields a
stencil which mimics a standard 5-point finite difference (FD) stencil
\cite{van_slingerland_fast_2014}.
Hence, heuristically it seems plausible to assume that any geometric multigrid preconditioner
suitable for a 5-point FD stencil should be appropriate to use as a preconditioner for the coarse grid solver associated
with $\laMat{A_0}$.

We first generate a hierarchy of meshes (with differing $h$) by isotropically coarsening the mesh defining $\laMat{A}_0$.
The maximum number of times
coarsening can be applied, and thus the number of levels in the $h$-multilevel preconditioner, is determined by the
spatial resolution of the grid. 
Note however that our implementation does not
support semi-coarsening, thus the finest grid must always employ an odd number of
elements in the $i$ and $j$ directions.
Between each mesh in the hierarchy, we have a restriction operator $\laMat{R}_h$ defined
by bilinear interpolation $(Q^2_1)$ and again we will use $\laMat{P}_h = \laMat{R}_h\transp$.

As in the $p$-multigrid implementation, we define coarse operators 
via Galerkin projection, e.g.
$\laMat{\bar{A}}_h = \laMat{P}_h\transp \laMat{A}_0\,\laMat{P}_h$.
The construction of Galerkin coarse operators is applied recursively for all levels in the mesh hierarchy. 
The smoother used within the $h$-multilevel
hierarchy is Chebyshev preconditioned with Jacobi. The Chebyshev bounds are
estimated similarly as for the $p$-coarsening smoother. 
On the coarsest level of the $h$-hierarchy, we will apply an exact LU factorisation. 
\subsubsection{Element-wise bilinear coarse problem}
\label{sec:stokes_preconditioner:viscous_block:q1}
In the case when the coarse space is $Q^2_1$, rather than leverage a finite difference analog to build a $h$-multigrid preconditioner, we will exploit methods designed for low order finite elements. 
Specifically, we consider projecting the $Q^2_1$ discontinuous space into the space of continuous bilinear functions.

We first define the discontinuous to continuous projector as the transpose
of the continuous to discontinuous projector.  The latter is a simple element-wise nodal to
modal projection. Let us denote the continuous-to-discontinuous
and discontinuous-to-continuous projectors by $\laMat{P}_\text{cd}$ and
$\laMat{R}_\text{dc}=\laMat{P}_\text{cd}^\text{T}$, respectively. 
We then project the discontinuous coarse level problem $\laMat{A}_{1}$ into $Q^2_1$ 
via $\laMat{A}_{1,\text{c}} = \laMat{P}_\text{cd}^\text{T}\,\laMat{A}_{1}\,\laMat{P}_\text{cd}$.
As in the element-wise constant case, a mesh hierarchy is created via isotropic coarsening 
and again we utilise linear interpolation and transposed restriction between each level.
All coarse operators are then constructed from $\laMat{A}_{1,\text{c}}$ and recursive application of Galerkin projection.
The same smoother and coarse level solver are used as in the element-wise constant case.

It is important to note that the size of the continuous $p$-coarse grid problem
$\laMat{A}_{1,\text{c}}$ equals the size of the element-wise constant coarse grid problem $\laMat{A}_\text{c}$
for comparable problem sizes.
For a $33\times33$ element mesh, e.g., the element-wise constant $p$-coarse space is
spanned by $33\times33\times2$ constants
while for a $32\times32$ element mesh, the continuous $p$-coarse space is spanned by 
$33\times33\times2$ bilinear (hat) functions.
\section{The heterogeneous viscosity Stokes benchmark SolCx}
\label{sec:solcx}
In order to verify the theoretical approximation properties of our
SIP based Stokes discretisation for heterogeneous problems,
we consider the \textit{SolCx} benchmark which has been considered
extensively for both solver and discretisation developments
\cite{may2008preconditioned, duretz_discretization_2011, kronbichler_high_2012}.
The analytic solution to the above problem is described in
\cite{zhong_analytic_1996} and is available as part of the \texttt{Underworld}
package \cite{moresi_computational_2007}.

Let $\domain$ be the unit square and let the viscosity $\eta$ contain a jump
in the lateral direction located along the line $0 < x_c < 1$.
We consider the problem of finding a solution to
\eqref{eq:governing_equations:momentum_balance} --
\eqref{eq:governing_equations:incompressibility_constraint}
s.t.~homogeneous Navier boundary conditions.
The free parameters of the model are chosen according to
$x_c=0.5$, $\eta_2=1$, and $\eta_1=10^6$.

Let us denote by $e_\uvec = \uvec - \uhvec$ and $e_\pscalar = \pscalar -
\phscalar$ the discretisation error of the velocity and the pressure fields,
respectively. The errors measured in the $\lebesgueTwo{\domain}$ norm for grids
employing an even number of elements in each direction and for different polynomial orders
are reported in Table~\ref{tab:solcx:even_grid}.
The discrete problem is solved to machine (double) precision.

For grids with an even number of elements in each coordinate direction, the jump
in viscosity is aligned with the edges of the elements. Consequently the discontinuous
basis functions accurately resolve the pressure field and thus optimal
convergence in $h$ is obtained.  
This was predicted in lemma \ref{thm:stokes_discretisation_error:error_norm_h}.
From the same lemma, we could only expect $\lebesgueTwo{\domain}$ convergence
in velocity that is suboptimal by one order however we observe optimal convergence rates.
Note that the $\lebesgueTwo{\domain}$ error estimate from lemma
\ref{thm:stokes_discretisation_error:error_norm_l2_uniform} is not applicable here
since the pressure solution is discontinuous.
The measured order of accuracy is shown in the final row
within Table~\ref{tab:solcx:even_grid}.

For grids employing $N^2$ elements, where $N$ is an odd number,
convergence degrades to first order convergence in velocity and
convergence by half an order in pressure (results not shown).
Degraded convergence was expected due to standard interpolation
error results.

\begin{table}
\centering
\caption[SolCx: Even grid]{SolCx Stokes benchmark. Results for grids
with a even number of elements in the $x$ and $y$ directions ($\Delta x$ = $\Delta y$).
All values below the horizontal lines in the second table
are considered as affected by machine precision.}
\label{tab:solcx:even_grid}
\renewcommand{\arraystretch}{1.1}
\footnotesize
\begin{tabular}{cccccccc}
\toprule
{} & \multicolumn{2}{c}{{$Q^2_1$--$Q_0$}} &
\multicolumn{2}{c}{$Q^2_2$--$Q_1$}  & \multicolumn{2}{c}{$Q^2_3$--$Q_2$}
\\
\cmidrule(l{2pt}r{2pt}){2-3}
\cmidrule(l{2pt}r{2pt}){4-5}
\cmidrule(l{2pt}r{2pt}){6-7}
$\Delta x$ &
\multicolumn{1}{c}{$\|e_\uvec\|_{L^2}$} &
\multicolumn{1}{c}{$\|e_\pscalar\|_{L^2}$}
&
\multicolumn{1}{c}{$\|e_\uvec\|_{L^2}$} &
\multicolumn{1}{c}{$\|e_\pscalar\|_{L^2}$}
&
\multicolumn{1}{c}{$\|e_\uvec\|_{L^2}$} &
\multicolumn{1}{c}{$\|e_\pscalar\|_{L^2}$}
\\
\midrule
1/2     & $1.3\times 10^{-3}$ & $6.7\times 10^{-2}$  & $6.5\times 10^{-4}$ & $1.4\times 10^{-2}$  & $9.9\times 10^{-5}$ & $2.0\times 10^{-3}$  \\
1/4     & $7.6\times 10^{-4}$ & $3.5\times 10^{-2}$  & $9.7\times 10^{-5}$ & $3.7\times 10^{-3}$  & $7.0\times 10^{-6}$ & $2.6\times 10^{-4}$  \\
1/8     & $2.2\times 10^{-4}$ & $1.7\times 10^{-2}$  & $1.2\times 10^{-5}$ & $9.4\times 10^{-4}$  & $4.5\times 10^{-7}$ & $3.2\times 10^{-5}$  \\
1/16    & $5.7\times 10^{-5}$ & $8.7\times 10^{-3}$  & $1.5\times 10^{-6}$ & $2.3\times 10^{-4}$  & $2.9\times 10^{-8}$ & $4.0\times 10^{-6}$  \\
1/32    & $1.4\times 10^{-5}$ & $4.4\times 10^{-3}$  & $1.9\times 10^{-7}$ & $5.9\times 10^{-5}$  & $1.8\times 10^{-9}$ & $5.1\times 10^{-7}$  \\
1/64    & $3.6\times 10^{-6}$ & $2.2\times 10^{-3}$  & $2.4\times 10^{-8}$ & $1.5\times 10^{-5}$  & $1.1\times 10^{-10}$ & $6.3\times 10^{-8}$  \\
1/128   & $9.1\times 10^{-7}$ & $1.1\times 10^{-3}$  & $3.0\times 10^{-9}$ & $3.7\times 10^{-6}$  & $7.0\times 10^{-12}$ & $7.9\times 10^{-9}$  \\
\cmidrule(l{2pt}r{2pt}){2-3}
\cmidrule(l{2pt}r{2pt}){4-5}
\cmidrule(l{2pt}r{2pt}){6-7}
&$\mathcal{O}(h^{1.98})$&$\mathcal{O}(h^{1.00})$&$\mathcal{O}(h^{3.00})$&$\mathcal{O}(h^{2.02})$&$\mathcal{O}(h^{3.97})$&$\mathcal{O}(h^{3.00})$\vspace{5mm}\\
\end{tabular}
\begin{tabular}{cccccccc}
\toprule
{} & \multicolumn{2}{c}{$Q^2_4$--$Q_3$} &
\multicolumn{2}{c}{$Q^2_5$--$Q_4$}  & \multicolumn{2}{c}{$Q^2_6$--$Q_5$}
\\
\cmidrule(l{2pt}r{2pt}){2-3}
\cmidrule(l{2pt}r{2pt}){4-5}
\cmidrule(l{2pt}r{2pt}){6-7}
$\Delta x$ &
\multicolumn{1}{c}{$\|e_\uvec\|_{L^2}$} &
\multicolumn{1}{c}{$\|e_\pscalar\|_{L^2}$}
&
\multicolumn{1}{c}{$\|e_\uvec\|_{L^2}$} &
\multicolumn{1}{c}{$\|e_\pscalar\|_{L^2}$}
&
\multicolumn{1}{c}{$\|e_\uvec\|_{L^2}$} &
\multicolumn{1}{c}{$\|e_\pscalar\|_{L^2}$}
\\
\midrule
1/2   &$7.1\times 10^{-6}$&$2.1\times 10^{-4}$ &$4.8\times 10^{-7}$&$1.4\times 10^{-5}$ &$3.7\times 10^{-8}$&$9.7\times 10^{-7}$  \\
1/4   &$2.5\times 10^{-7}$&$1.3\times 10^{-5}$ &$9.4\times 10^{-9}$&$4.5\times 10^{-7}$ &$3.4\times 10^{-10}$&$1.6\times 10^{-8}$  \\
\cline{6-7}
1/8   &$8.2\times 10^{-9}$&$8.4\times 10^{-7}$ &$1.6\times 10^{-10}$&$1.4\times 10^{-8}$ &$5.4\times 10^{-12}$&$2.8\times 10^{-10}$  \\
\cline{4-5}
1/16  &$2.6\times 10^{-10}$&$5.3\times 10^{-8}$ &$4.8\times 10^{-12}$&$4.8\times
10^{-10}$ &&  \\
1/32  &$8.3\times 10^{-12}$&$3.3\times 10^{-9}$ & & & \\
\cline{2-3}
1/64  &$1.8\times 10^{-12}$ & $2.2\times 10^{-10}$ & & & &   \\
\cmidrule(l{2pt}r{2pt}){2-3}
\cmidrule(l{2pt}r{2pt}){4-5}
\cmidrule(l{2pt}r{2pt}){6-7}
&$\mathcal{O}(h^{4.96})$&$\mathcal{O}(h^{4.01})$&$\mathcal{O}(h^{5.88})$&$\mathcal{O}(h^{5.01})$&$\mathcal{O}(h^{6.77})$&$\mathcal{O}(h^{5.92})$\\
\end{tabular}
\end{table}$~$
\section{Solver performance} \label{sec:solver_performance}
In this section, we evaluate the robustness and scalability of
the Stokes solver discussed in Sec.~\ref{sec:stokes_preconditioner:viscous_block}.
Specifically we consider four variants of the preconditioner associated with the
$\laMat{A}$ operator described in Sec.~\ref{sec:stokes_preconditioner:viscous_block}.
The first two configurations $\mathcal{A}_p$($Q^2_0$) and
$\mathcal{A}_{hp}(Q^2_0)$ employ a two level $p$-coarsening strategy in which the polynomial order is 
aggressively coarsened until we obtain a $Q^2_0$ basis.
The latter additionally applies geometric coarsening as detailed
in section \ref{sec:stokes_preconditioner:viscous_block:q0}.
Similarly we introduce 
$\mathcal{A}_p$($Q^2_1$) and $\mathcal{A}_{hp}(Q^2_1)$ where
the latter uses a geometric coarsening strategy as outlined in 
\ref{sec:stokes_preconditioner:viscous_block:q1}.
The number of Chebyshev-accelerated element-block Jacobi smoothing steps for
the two level $p-$coarsening is set to 2 for all preconditioners (up and down
smoothing each).
The smoothers employed in the $h$-coarsening part
of $\mathcal{A}_{hp}(Q^2_0)$ and $\mathcal{A}_{hp}(Q^2_1)$
run 3 iterations (up and down smoothing each). 
In all experiments, the coarsest level in both the $p$-multigrid, and $hp$-multigrid variants employed LU factorisation.

All numerical experiments were performed on a
single node of ``Hamilton'', located at Durham University (UK), equipped with
two Intel Xeon E5-2650 v2 (Ivy Bridge) 8 core 2.6 GHz processors with
64 GB of RAM. Experiments that state solve times have been
performed using only a single processor. If not other otherwise indicated, we
will perform experiments in double precision.
\subsection{SolCx}
We consider the SolCx benchmark
(Sec.~\ref{sec:solcx}) with parameters $x_c=0.5$, $\eta_2=1$. 
As free parameters we use the viscosity contrast $\Delta\eta=\eta_1:\eta_2$, 
and the grid resolution in our tests.
We discretise the problem using a second order velocity space
and a first order pressure space ($Q^2_2$--$Q_1$ elements).
For preconditioners $\mathcal{A}_{p}$($Q^2_0$), $\mathcal{A}_{p}$($Q^2_1$), 
and $\mathcal{A}_{hp}(Q^2_1)$, we consider meshes with the sizes
$64^2$, $128^2$, $256^2$,and $512^2$.
We remark that we have to use odd numbers of elements in each
coordinate direction if we want to use the preconditioner
$\mathcal{A}_{hp}(Q_0$); see Sec.
\ref{sec:stokes_preconditioner:viscous_block:q0}.
We then consider the mesh with element resolutions of  
$65^2$, $129^2$, $257^2$,and $513^2$.
We further perform an $L^2$ projection of the
original viscosity on the element-wise constants since the mesh does not align
with the viscosity structure for these meshes.
For both preconditioners $\mathcal{A}_{hp}(Q^2_0)$ and
$\mathcal{A}_{hp}(Q^2_1)$, we use 3, 4, 5, and 6 $h$-multigrid levels for
the considered mesh sizes, respectively.

We used right preconditioned FGMRES to solve the Stokes problem.
Convergence of the saddle point problem is deemed to have occurred when the 2-norm of the residual is $10^6$ times smaller than the initial residual (which we denote via ${\mathrm{rtol(FGMRES)}=10^{-6}}$).
The inner solver applied to $\laMat{A}$ is preconditioned CG and is terminated
according to a relative tolerance criterion of $\mathrm{rtol(CG)}=10^{-3}$.
A flexible Krylov method is not required for the viscous block solve since the $p$- and $hp$-multigrid 
preconditioners are linear operators.
The wall-clock time and iterations required to solve the Stokes problem,
as well as the iterations required by the viscous block solve are reported
in Table~\ref{tab:solver_performance:stokes:solcx}.

The overall Stokes solver is observed to be scalable
for all four preconditioners as the number of outer and inner iterations are
virtually independent of the grid resolution for a given viscosity contrast.
The viscous block preconditioners $\mathcal{A}_{p}(Q^2_1)$ and 
$\mathcal{A}_{hp}(Q^2_1)$ yield significantly
less inner iterations than the other two preconditioners. 

We note that due to the
usage of an LU factorisation on the coarsest level problem, we do not observe optimal (e.g.
$O(n)$) solve times for the variants which do not employ
$hp$-multigrid. Clearly, $\mathcal{A}_{p}(Q^2_1)$ has a significantly
larger coarse grid problem than the other solvers; (no
discontinuous-to-continuous projection is performed), thus CPU time is far from optimal.
We note that the timings for $\mathcal{A}_{p}(Q^2_0)$ are close to optimal. 
However, for increasing problem sizes, can can expect further departure from $O(n)$ as observed when using $\mathcal{A}_{p}(Q^2_1)$.
Variant $\mathcal{A}_{hp}(Q^2_1)$ clearly outperforms the three other
preconditioners in terms of solve time, and the 
inner solver only needs one more iterations 
(at maximum) than $\mathcal{A}_{p}(Q^2_1)$. 

\begin{table}
\renewcommand{\arraystretch}{1.1}
\footnotesize
\caption{SolCx: Performance of the Stokes solver using
different viscous block preconditioners (see text for details) 
as a function viscosity jump $(\Delta \eta)$ and the number of
elements $N_K$. The polynomial order of the velocity is fixed to $k=2$.
Here, \#it indicates the number of outer iterations applied to the Stokes operator,
whilst numbers in brackets indicate the average and maximum iterations required
by the viscous block solver.
$t$ denotes the CPU time required for the solve.
See the text for details on the
stopping criteria.}
\label{tab:solver_performance:stokes:solcx}
\centering
\begin{tabular}{cp{0.5cm} r rr rr}
\toprule
&{} & \multicolumn{2}{c}{$\Delta \eta = 10^0$} &
\multicolumn{2}{c}{$\Delta \eta = 10^6$}
\\
\cmidrule(l{2pt}r{2pt}){3-4}
\cmidrule(l{2pt}r{2pt}){5-6}
Preconditioner for $\laMat{A}$  &
$N_K$ &
\multicolumn{1}{c}{\#it} &
\multicolumn{1}{c}{$t \,(\textup{s})$}
&
\multicolumn{1}{c}{\#it} &
\multicolumn{1}{c}{$t \,(\textup{s})$}
\\
\midrule
{$\mathcal{A}_{p}(Q^2_0)$} 
         & $64^2$   & 3 (21.3, 23) & $2.76$   & 5 (34.2, 43) & $12.23$\\
{}       & $128^2$  & 3 (20.7, 22) & $12.24$  & 5 (36.6, 45) & $54.19$\\
{}       & $256^2$  & 3 (20.7, 23) & $54.71$  & 5 (35.4, 46) & $212.55$\\
{}       & $512^2$ & 3 (20.0, 23) & $302.21$ & 6 (33.5, 46) & $965.76$\\
\midrule
{$\mathcal{A}_{hp}(Q^2_0)$} 
         & $65^2$   & 3 (21.3, 23) & $4.90$   & 5 (36.6, 45) & $13.61$\\
{}       & $129^2$  & 3 (21.0, 22) & $19.26$  & 5 (39.0, 46) & $57.74$\\
{}       & $257^2$  & 3 (20.7, 22) & $75.43$  & 5 (40.4, 46) & $237.43$\\
{}       & $513^2$ & 3 (20.7, 23) & $301.30$ & 5 (43.8, 54) & $1028.10$\\
\midrule
{$\mathcal{A}_{p}(Q^2_1)$}  
         & $64^2$   & 3 (3.3, 4) & $1.47$   & 5 (4.8, 6) & $3.55$   \\
{}       & $128^2$  & 3 (3.3, 4) & $21.74$   & 5 (4.8, 6) & $30.87$  \\
{}       & $256^2$  & 3 (3.3, 4) & $323.77$  & 5 (4.6, 6) & $355.21$  \\
{}       & $512^2$  & 3 (3.3, 4) & $2681.80$  & 5 (4.6, 6) & $2805.20$ \\
\midrule
{$\mathcal{A}_{hp}(Q^2_1)$}  
         & $64^2$   & 3 (4.0, 4) & $1.14$   & 5 (5.2, 7) & $2.22$   \\
{}       & $128^2$  & 3 (3.7, 4) & $4.29$   & 5 (5.4, 7) & $10.75$  \\
{}       & $256^2$  & 3 (3.7, 4) & $17.66$  & 5 (5.2, 6) & $37.18$  \\
{}       & $512^2$  & 4 (3.7, 4) & $70.59$  & 5 (5.4, 7) & $155.52$ \\
\bottomrule
\end{tabular}
\end{table}
\subsection{SolCx checkerboard}
In the last section we observed that the preconditioners based on an 
element-wise bilinear $p$-coarse space are significantly more
efficient in terms of iterations than their counterparts using an element-wise
constant coarse space.
The preconditioner $\mathcal{A}_{hp}(Q^2_1)$ applying $h$-coarsening was further
found to yield solve times that scale optimally and which are significantly
smaller than those of the other three preconditioners.
In the following tests, we will thus only consider $\mathcal{A}_{hp}(Q^2_1)$.

In order to demonstrate the robustness of this preconditioner for
harder problems, we again solve a SolCx setting but this time with an
additional viscosity jump in the $y$--direction at $y=0.5$. The resulting
viscosity structure is a $2\times2$ checkerboard. All solver components are
configured as detailed in the previous section.

This time we further investigate the influence of the polynomial order
on the convergence, and we consider the viscosity
contrasts $\eta_2 : \eta_1 = \Delta\eta\in\{10^3, 10^6, 10^8\}$.

The wall-clock time and iterations required to solve the Stokes problem,
as well as the iterations required by the viscous block solve are reported
in Table~\ref{tab:solver_performance:stokes:solcx_checkerboard}.

Incrementing the polynomial order by one increments the 
iterations necessary to converge the viscous solve 
by around 5 independent of the viscosity jump.  
A slight dependence of the outer iterations on the polynomial order $k$
is observed for this problem. The most noticeable dependence of
the outer iterations on $k$ can be observed between the elements
$Q^2_1$--$Q_0$ and $Q^2_2$--$Q_1$.

For a given $Q^2_k$--$Q_{k-1}$ element pair, the following additional observations can be made:
(1) close to optimal solve times are observed under mesh refinement, and 
(2) the solve times are almost independent of the jump in viscosity.

\begin{table}
\renewcommand{\arraystretch}{1.1}
\footnotesize
\caption{SolCx checkerboard: Performance of the Stokes solver using
configuration $\mathcal{A}_{hp}(Q^2_1)$ (see text for details) as a function of element order, viscosity jump $(\Delta \eta)$ and the number of elements $N_K$.
Refer to Table~\ref{tab:solver_performance:stokes:solcx} for the definition of
the data reported.
Columns reporting iterations which are marked with a ($*$) indicate jobs which required $>64$ GB of RAM and thus could not be executed.
}
\label{tab:solver_performance:stokes:solcx_checkerboard}
\centering
\begin{tabular}{p{0.2cm}p{0.5cm} rrr rrr}
\toprule
&{} & \multicolumn{2}{c}{$Q^2_1$--$Q_0$} &
\multicolumn{2}{c}{$Q^2_2$--$Q_1$}  & \multicolumn{2}{c}{$Q^2_3$--$Q_2$}
\\
\cmidrule(l{2pt}r{2pt}){3-4}
\cmidrule(l{2pt}r{2pt}){5-6}
\cmidrule(l{2pt}r{2pt}){7-8}
$\Delta\eta$ &
$N_K$ &
\multicolumn{1}{c}{\#it} &
\multicolumn{1}{c}{$t \,(\textup{s})$}
&
\multicolumn{1}{c}{\#it} &
\multicolumn{1}{c}{$t \,(\textup{s})$}
&
\multicolumn{1}{c}{\#it} &
\multicolumn{1}{c}{$t \,(\textup{s})$}
\\
\midrule
 {$10^3$} & $64^2$   & 14 (4.0, 5) & $0.97$             & 17 (7.9, 10)    &
 $10.47$      & 18 (11.7, 15) & $47.50$   \\
 {} & $128^2$  & 15 (4.2, 5) & $5.38$             & 17 (8.1, 11)    & $43.35$      & 18 (11.4, 16) & $187.51$  \\
 {} & $256^2$  & 16 (4.9, 6) & $27.20$            & 18 (8.2, 11)    & $188.94$     & 22 (11.0, 17) & $892.24$  \\
 {} & $512^2$  & 15 (5.1, 7) & $108.45$           & 17 (8.4, 12)    & $731.49$     & 23 (11.0, 18) & $3697.00$ \\
 {} & $1024^2$ & 15 (5.1, 7) & $458.02$           &*  &  &*  & \\
\midrule
 {$10^6$} & $64^2$   & 15 (4.1, 5) & $1.08$             & 17 (8.4, 11) & $11.15$ 
 & 17 (12.4, 16) & $47.85$   \\
 {} & $128^2$  & 19 (4.6, 6) & $7.27$             & 17 (8.3, 11) & $44.60$  & 17 (12.4, 17) & $193.43$  \\
 {} & $256^2$  & 15 (4.7, 6) & $24.79$            & 17 (8.6, 12) & $187.26$ & 20 (11.3, 17) & $833.87$  \\
 {} & $512^2$  & 17 (5.2, 7) & $122.68$           & 19 (8.7, 12) & $847.84$ & 19 (12.4, 18) & $3846.70$ \\
 {} & $1024^2$ & 17 (5.4, 8) & $508.84$           &*  &  &*  & \\
\midrule
 {$10^8$} & $64^2$   & 15 (4.2, 6) & $1.10$             & 16 (9.0, 11) & $11.20$ 
 & 18 (13.1, 17) & $52.84$   \\
 {} & $128^2$  & 15 (4.9, 6) & $8.03$             & 16 (9.2, 12) & $46.58$  & 18 (13.3, 18) & $218.42$  \\
 {} & $256^2$  & 14 (5.1, 6) & $24.69$            & 17 (9.3, 12) & $201.54$ & 19 (13.3, 18) & $918.59$  \\
 {} & $512^2$  & 19 (5.2, 7) & $136.65$           & 17 (9.6, 13) & $829.67$ & 18 (13.5, 19) & $3795.30$ \\
 {} & $1024^2$ & 21 (5.6, 8) & $814.33$           & *  &  &*  &  \\
\bottomrule
\end{tabular}
\end{table}
\subsection{Sedimenting viscous circular inclusions}
In our final test, we place six circular inclusions with viscosity
$\eta_2 \in\{10^3, 10^6, 10^8\}$ and density $\rho_2=1.2$ in a
medium with viscosity $\eta_1=1$ and density $\rho_1=1$.
We then consider a forcing term $\vec{f}=(0,-g\,\rho)$,
where the gravity constant is chosen as $g=10$.
The inclusions are placed at 
$(0.84,0.39)$, $(0.79,0.91)$, $(0.33,0.76)$,
$(0.55,0.47)$, $(0.14,0.60)$, $(0.24,0.13)$
and have radii
$0.089$, $0.059$, $0.063$, $0.081$, $0.05$ and $0.09$ respectively.
The model configuration is such that the inclusions sediment under gravity.
At the top boundary, homogeneous Neumann boundary conditions are imposed
while on the remaining parts of the boundary, homogeneous 
Navier boundary conditions are imposed.

An approximation of the model and a corresponding numerical velocity solution 
computed using $Q^2_2$--$Q_1$ discontinuous finite elements is
depicted in Fig.~\ref{fig:sinkers}.
The model is approximated using a uniform mesh consisting of 
$256^2$ elements.

\begin{figure}[htp]
\centering
\epsfig{width=0.55\textwidth,file=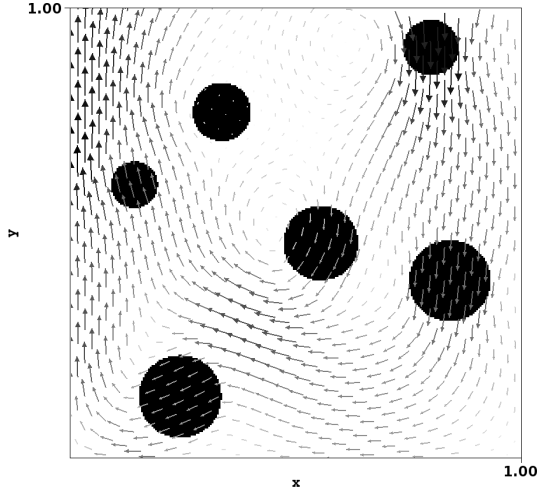}
\caption[]{Sedimenting circular inclusions: Approximate material composition
(pseudocolour plot) and a corresponding numerical velocity solution
(vector plot) computed using discontinuous $Q^2_2$--$Q_1$ elements. The model
is discretised using a uniform mesh consisting of $256^2$ elements.
Viscosity and density are chosen $\eta_1=1$ and $\rho_1=1$, respectively, for
the background material while they are chosen $\eta_2=10^6$ and $\rho_2=1.2$ for the
circular inclusions.}
\label{fig:sinkers}
\end{figure}

A Cartesian grid can never resolve these viscosity and
density distributions exactly, 
we thus always take the element-wise maximum
viscosity and the element-wise minimum density.
Thus, the discrete problem has by construction a mesh 
dependence.

We consider meshes with element resolutions of 
$64^2,\,128^2,\,256^2,\,512$, and $1024^2$ and employ
$2$, $3$, $4$, $5$ and $6$ $h$-coarsening levels; one level less
than in the previous experiments.
We use GCR for both the Stokes (outer) problem and the viscous block (inner) problem.

The wall-clock time and iterations required to solve the Stokes problem,
as well as the iterations required by the viscous block solve are reported
in Table~\ref{tab:solver_performance:stokes:sinker6}.
We vary again the polynomial order $k\in{1,2,3}$ and
the viscosity contrast $\Delta\eta\in\{10^3, 10^6, 10^8\}$.
As in the previous two experiments which used a simpler viscosity structure, using $\mathcal{A}_{hp}(Q^2_1)$ 
with the sinker model configuration we observe that the Stokes preconditioner is both scalable and near optimal. 
Outer iterations are observed to be only very weakly dependent on the jump in viscosity, and mildly dependent on the polynomial degree.
The inner iterations required to converge the viscous block are approximately independent of the viscosity jump for each polynomial degree considered.
As per other experiments, the average number of iterates required to converge the viscous block are mildly dependent on the polynomial degree.

\begin{table}
\renewcommand{\arraystretch}{1.1}
\footnotesize
\caption{Six circular inclusions: Performance of the Stokes solver using
configuration $\mathcal{A}_{hp}(Q^2_1)$ (see text for details) 
as a function of element order, viscosity jump $(\Delta \eta)$ and the number of elements $N_K$.
Refer to Table~\ref{tab:solver_performance:stokes:solcx} for the definition of
the data reported.}
\label{tab:solver_performance:stokes:sinker6}
\centering
\begin{tabular}{p{0.2cm}p{0.5cm} rrr rrr}
\toprule
&{} & \multicolumn{2}{c}{$Q^2_1$--$Q_0$} &
\multicolumn{2}{c}{$Q^2_2$--$Q_1$}  & \multicolumn{2}{c}{$Q^2_3$--$Q_2$}
\\
\cmidrule(l{2pt}r{2pt}){3-4}
\cmidrule(l{2pt}r{2pt}){5-6}
\cmidrule(l{2pt}r{2pt}){7-8}
$\Delta\eta$ &
$N_K$ &
\multicolumn{1}{c}{\#it} &
\multicolumn{1}{c}{$t \,(\textup{s})$}
&
\multicolumn{1}{c}{\#it} &
\multicolumn{1}{c}{$t \,(\textup{s})$}
&
\multicolumn{1}{c}{\#it} &
\multicolumn{1}{c}{$t \,(\textup{s})$}
\\
\toprule
{$10^3$} & $64^2$   & 17 (3.3, 6) & $0.97$   & 20 (6.7, 11) & $9.54$   &
21 (8.9, 16)      & $38.12$      \\
{} & $128^2$  & 16 (3.5, 7) & $4.47$   & 19 (6.6, 12) & $36.36$  &
22 (9.0, 16)      & $163.31$     \\
{} &$256^2$  & 17 (3.8, 8) & $21.16$  & 19 (6.8, 12) & $151.00$ &
22 (8.9, 17)      & $650.42$     \\
{} & $512^2$  & 19 (3.8, 9) & $96.98$ & 20 (6.7, 12) & $668.22$ & 26 (9.4, 35) & $3502.30$ \\
{} &$1024^2$ & 25 (4.0, 9) & $589.39$ & $*$ &  & $*$  & \\
\midrule
 {$10^6$} & $64^2$   & 17 (3.4, \phantom{0}6)  & $0.98$ & 20 (6.7,
 11) & $9.50$ & 20 (\phantom{0}8.9, 16)  & $36.28$   \\
 {} & $128^2$  & 17 (3.6, \phantom{0}8)  & $4.92$ & 18 (6.8, 12) &
 $35.47$ & 21 (\phantom{0}8.7, 16)  & $151.37$  \\
 {} & $256^2$  & 19 (4.1, 10) & $25.43$            & 18 (6.9, 13) & $145.18$ &
 24 (\phantom{0}9.0, 17)  & $715.33$  \\
 {} & $512^2$  & 19 (4.4, 11) & $108.61$           & 21 (6.9, 12) & $710.49$ & 25 (10.2, 35) & $4606.20$ \\
 {} & $1024^2$ & 22 (4.6, 11) & $603.12$           &  $*$  &  &  $*$ &  \\
\bottomrule
\end{tabular}
\end{table}

\subsubsection{Quadruple-precision floating point arithmetic}
\label{sec:results:sinkers_quad}
GCR was adopted in the previous experiment as we observed
that the orthogonalisation procedures of CG, GMRES, and
FGMRES would break-down for viscosity jumps $\Delta\eta > 10^3$
at a given mesh size.
We further note that this break-down behaviour appears to be independent of the viscous 
block preconditioner as it also occurred when using $\mathcal{A}_{p}(Q^2_1)$. 

We could trace the break-down back to being related to a lose of floating point precision. 
Using quadruple-precision floating point arithmetic, break-down of the orthogonalisation does not occur and 
our preconditioner is able to solve problems 
with extreme viscosity contrasts ($\Delta \eta \sim 10^{20}$).
A selected number of results using $\Delta \eta = 10^{10}, 10^{20}$ are reported in Table \ref{tab:solver_performance:stokes:sinker6_quad}.
As per the results obtained with double-precision, the Stokes and viscous block preconditioner are observed to be scalable, 
and solve times are close to optimal. 
For these experiments, the outer solver was chosen as FGMRES  
and the inner solver used was CG.

\begin{table}
\renewcommand{\arraystretch}{1.1}
\footnotesize
\caption{Six circular inclusions (quad precision): Performance of the Stokes
solver using configuration $\mathcal{A}_{hp}(Q^2_1)$ (see text for details) 
as a function of element order, extreme viscosity jumps $(\Delta \eta)$, and
the number of elements $N_K$.
Refer to Table~\ref{tab:solver_performance:stokes:solcx} for the definition of
the data reported. CPU time columns marked with (--) indicate the job required
longer than 24 hrs to complete.}
\label{tab:solver_performance:stokes:sinker6_quad}
\centering
\begin{tabular}{p{0.2cm}p{0.5cm} rrr rrr}
\toprule
&{} & \multicolumn{2}{c}{$Q^2_1$--$Q_0$} &
\multicolumn{2}{c}{$Q^2_2$--$Q_1$}  & \multicolumn{2}{c}{$Q^2_3$--$Q_2$}
\\
\cmidrule(l{2pt}r{2pt}){3-4}
\cmidrule(l{2pt}r{2pt}){5-6}
\cmidrule(l{2pt}r{2pt}){7-8}
$\Delta\eta$ &
$N_K$ &
\multicolumn{1}{c}{\#it} &
\multicolumn{1}{c}{$t \,(\textup{s})$}
&
\multicolumn{1}{c}{\#it} &
\multicolumn{1}{c}{$t \,(\textup{s})$}
&
\multicolumn{1}{c}{\#it} &
\multicolumn{1}{c}{$t \,(\textup{s})$}
\\
\toprule
 {$10^{10}$} 
    & $64^2$   & 17 (4.1, \phantom{0}7)  & $57.32$             & 18 (10.1, 15) &
    $571.04$  & 18 (13.3, 20) & $2154.60$  \\
 {} & $128^2$  & 16 (4.6, \phantom{0}9)  & $236.45$            & 17
 (\phantom{0}9.9, 17)  & $2087.30$  & 18 (13.9, 21) & $9153.80$  \\
 {} & $256^2$  & 18 (5.6, 11) & $1236.20$           & 17 (\phantom{0}8.8, 15)  &
 $7325.30$  & 18 (14.7, 23) & $37673.00$ \\
 {} & $512^2$  & 16 (6.1, 12) & $4693.20$           & 17 (\phantom{0}8.4, 16)  &
 $27888.00$ & -          & -          \\
\midrule
 {$10^{20}$} 
     & $64^2$   & 18 (8.3, 10)  & $118.30$            & 21 (14.9, 19) & $984.84$  
     & 26 (17.8, 26) & $4257.80$  \\
 {} & $128^2$  & 18 (10.7, 13) & $596.19$            & 18 (17.6, 21) & $4088.10$  & 25 (19.6, 27) & $18110.00$ \\
 {} & $256^2$  & 18 (12.3, 16) & $2697.60$           & 19 (15.0, 19) & $14058.00$ & 28 (17.7, 27) & $78188.00$ \\
 {} & $512^2$  & 18 (13.4, 17) & $11480.00$          & 16 (14.8, 20) & $60247.00$ & -          & -          \\
\bottomrule
\end{tabular}
\end{table}

\section{Conclusions} \label{sec:conclusion}
We have investigated high order SIP based discretisations of the variable
viscosity Stokes flow. We have demonstrated that the discretisations are
optimally convergent in $h$ for prototypical geodynamics problems 
where the viscosity discontinuity can be resolved by the grid.

For the solution of the saddle point system arising from the 
discretisation of the Stokes equations, we proposed an iterative
method based on block preconditioned FGMRES for the overall linear system and 
$hp$-multilevel preconditioned CG for the viscous block. 
We considered coarsening the polynomial degree of the viscous block to either the space of 
piecewise constants ($Q^2_0$), or bilinear functions ($Q^2_1$), 
and for each coarse space, a $h$-multigrid preconditioner was proposed.

Through a series of numerical experiments with heterogeneous viscosity, we
have demonstrated that the $h$-multigrid strategy results in a more robust coarse level preconditioner.
This was attributed to the fact that the $Q^2_0$ coarse space by construction excludes cross derivatives 
which appear in the definition of the stress tensor when the viscosity is a function of space. 
Neglecting these terms in the coarse space does not result in error corrections which drive the fine level residual to zero.
In contrast, the $h$-multigrid variant considered for the $Q^2_1$ coarse space problem results in a solver with a convergence rate that was observed to be independent of the
number of elements, largely insensitive to the both the viscosity structure and the jump in viscosity,
and only weakly dependent on the approximation order.

Lastly, we have outlined the importance of choosing the face-wise SIP penalty
parameters depending on the local viscosity and close to the lower bound of the stable regime in order to
minimise discretisation errors and the number of iterations of the nested
inner solver.

\subsection{Outlook}
Subject of future research could be an extension of the methodology to $h$- and
$p$-adaptive methods, to three dimensions, as well as to distributed and
shared memory parallelism.
Furthermore, it might be interesting to perform an analysis of the 
$p$-coarse level operator defined on the space of continuous, element-wise
bilinear functions. The operator might be related to a Nitsche type
discretisation.
In the context of extreme viscosity contrast problems ($\Delta \eta > 10^6$),
one might want to analyse for which substeps and constituents of the considered
preconditioned Krylov methods high precision is required.

\section{Acknowledgments}
All numerical experiments were performed using the PETSc library
\cite{petsc-user-ref,petsc-web-page,petsc-efficient}.
ETH Z\"urich is thanked for compute time on the Brutus and
Euler clusters. The Swiss National Supercomputing Centre (CSCS) is thanked for
compute time on Piz Daint.
This work made use of the facilities of the Hamilton HPC Service of Durham
University.

We thank two anonymous reviewers for their critical remarks which
motivated us to significantly improve our solver methodology.


\end{document}